\documentclass{amsproc}

\usepackage{amsmath,amssymb,amsfonts}
\usepackage[dvipsnames,usenames]{color}
\usepackage{array}
\usepackage{accents}

\newcolumntype{L}{>{\displaystyle}l}
\newcolumntype{C}{>{\displaystyle}c}
\newcolumntype{R}{>{\displaystyle}r}

\newcommand{\R}{\mathbb R}
\newcommand{\N}{\mathbb N}

\newcommand{\C}{\mathbb C}

\newcommand{\hilb}{\mathcal H}

\renewcommand{\Im}{\mathrm{Im}}

\newcommand{\til}{\widetilde}

\newcommand{\I}{\mathfrak{i}}
\newcommand{\diff}{\mathrm{d}}
\renewcommand{\bar}{\overline}
\newcommand{\Dom}{\mathrm{Dom}}
\newcommand{\Leb}{\mathrm{L}}
\newcommand{\Id}{\mathrm{Id}}
\newcommand{\Ran}{\mathrm{Ran}}
\newcommand{\Mult}{\mathrm{Mult}}
\newcommand{\ntto}{\:\scriptsize{\xrightarrow{\vee\:}}\:}
\newcommand{\cut}{\C\setminus\R}
\newcommand{\cutN}{(\C\setminus\R)^n}

\newcommand{\relone}[2]{\left(\begin{array}{c}
#1 \\
#2
\end{array}\right)}
\newcommand{\relpair}[4]{\left\{\left(\begin{array}{c}
#1 \\
#2
\end{array}\right),\left(\begin{array}{c}
#3 \\
#4
\end{array}\right)\right\}}

\newcommand{\matfour}[4]{\left[\begin{array}{cc}
#1 & #2 \\
#3 & #4
\end{array}\right]}

\newcommand{\ie}{\textit{i.e.}\/ } 
\newcommand{\eg}{\textit{e.g.}\/ } 
\newcommand{\cf}{\textit{cf.}\/ } 

\usepackage{bm}
\renewcommand{\vec}{\bm}

\allowdisplaybreaks

\theoremstyle{definition}
\newtheorem{define}{Definition}[section]
\newtheorem{example}[define]{Example}
\newtheorem{remark}[define]{Remark}
\newtheorem{remarks}[define]{Remarks}

\theoremstyle{plain}
\newtheorem{lemma}[define]{Lemma}
\newtheorem{thm}[define]{Theorem}
\newtheorem{prop}[define]{Proposition}

\numberwithin{equation}{section}

\begin{document}

\title[Operator representations]{Operator representations of Herglotz-Nevanlinna functions in several variables}

\author{Erik Avelin}
\address{Erik Avelin, Department of Mathematics, Stockholm University, SE-106 91 Stockholm, Sweden.}
\curraddr{}
\email{erik.avelin@math.su.se}

\author{Mitja Nedic}
\address{Mitja Nedic, Department of Mathematics, Physics and Chemical Engineering, School of Engineering, Jönköping University, SE-551 11 Jönköping, Sweden, orc-id: 0000-0001-7867-5874}
\curraddr{}
\email{mitja.nedic@ju.se}
\thanks{\textit{Key words.} Herglotz-Nevanlinna functions,
  self-adjoint linear relations, operator representations, positive
  semi-definite functions, reproducing kernel Hilbert spaces.}

\subjclass[2020]{47A06 (primary), 32A26, 47B32 (secondary).}


\begin{abstract}
In this paper, we present an operator representation for Herglotz-Nevanlinna functions in several variables using self-adjoint linear relations. Moreover, a description of the reproducing kernel Hilbert space corresponding to a Herglotz-Nevanlinna function of several variables is also presented, along with a comparison to some existing result concerning the class of Loewner functions.
\end{abstract}

\maketitle

\section{Introduction}

In the poly-upper half-plane $\C^{+n} := \{\vec{z} \in \C^n~|~\Im[z_j] > 0 \;\text{for}\;j=1,\ldots,n\}$, we consider holomorphic functions with non-negative imaginary part. These functions are called \emph{Herglotz-Nevanlinna functions} and some common points of interest in their study are various representation results for these functions. For example, an operator representation may entail being able to write the value of the function at a certain point with the help of an expression involving \eg the resolvent of a self-adjoint operator on some Hilbert space, \cf \cite{AglerEtal2012,AglerEtal2016,KreinLanger1977,LangerTextorius1977}. Other variants of representations can be \eg integral representations, \cf \cite{Cauer1932,KacKrein1974,LugerNedic2017,LugerNedic2019,Vladimirov1969,Vladimirov1979,VladimirovDrozzinov1974}, representations using distributions, \cf \cite{Zemanian1965}, or representations using positive semi-definite functions, \cf \cite{Nedic2021b,Pick1915}.

For functions of one variable, the study of operator representations of Herglotz-Nevanlinna functions dates back around 100 years, though the results were not formulated in the language of operator theory. An early result of Nevanlinna, written using modern notation, shows that given a function $q\colon \C^+ \to \C$ there exists a finite positive Borel measure $\mu$ on $\R$ such that $q(z) = \int_\R(t-z)^{-1}\diff\mu(t)$ if and only if $q$ is holomorphic, $\Im[q(z)] \geq 0$ for all $z \in \C^+$ and $\liminf_{y \to \infty}y|q(\I\,y)| < \infty$ \cite{AglerEtal2016,Nevanlinna1922}. In the language of operator theory, this result can be reformulated in the following way. Given a function $q\colon \C^+ \to \C$ there exists a Hilbert space $(\hilb,\langle\,\cdot\,,\,\cdot\,\rangle_\hilb)$ a bounded self-adjoint operator in $\hilb$ such that $q(z) = \langle(A-z)^{-1}v,v\rangle_\hilb$ if and only if the same three conditions as before are satisfied. This representation was later extended to cover the whole class of Herglotz-Nevanlinna function in one variable as well as various generalizations, see \eg
\cite{KreinLanger1977,LangerTextorius1977}.

The results of this paper concern operator representations of Herglotz-Nevanlinna functions of several variables. More precisely, the main theorem states that a function $q\colon\C^{+n} \to \C$ is a Herglotz-Nevanlinna function if and only if it holds, for all $\vec{z},\vec{w} \in \C^{+n}$, that
\begin{multline*}
q(\vec{z}) = \bar{q(\vec{w})} \\ + \frac{1}{(2\,\I)^{n-1}}\prod_{k=1}^n(z_k-\bar{w_k}) \cdot \bigg\langle\prod_{k=1}^n(\Id + (z_k - w_k)(A_k - z_k)^{-1})\,v_{\vec{w}},v_{\vec{w}}\bigg\rangle_\hilb,
\end{multline*}
where $(\hilb,\langle\cdot,\cdot\rangle_\hilb)$ is a complex Hilbert space, $A_1,\ldots,A_n$ are self-adjoint linear relations in $\hilb$ and $v_{\vec{w}} \in \hilb$, \cf Theorem \ref{thm:opRep_Nvar}. A refinement of this result is presented in Theorem \ref{thm:opRep_alt}.

The structure of this paper is as follows. After the introduction, we focus, in Section \ref{sec:linear_relations}, on reviewing some basic facts about linear relations. Section \ref{sec:about_HN_functions} reviews instead the basic existing representations of Herglotz-Nevanlinna functions that are used later on, and Section \ref{sec:opRep} focuses on the main result of this paper, its proof and interpretations. Finally, Section \ref{sec:comparisons} reviews some previous results in the area, particularly a representation for the class of Loewner functions. Additionally, Subsection \ref{sec:rep_kernel} provides a description of the reproducing kernel Hilbert space corresponding to a Herglotz-Nevanlinna function of several variables, as follows from the proof of the main result.

\section{Linear relations on a Hilbert space}
\label{sec:linear_relations}

In this section, we review some basic facts and definitions that are used throughout the later sections.
For a comprehensive introduction to the topic of linear relations in a general setting we refer to \eg \cite[Ch. 1]{BehrndtHassiDeSnoo2020}, which is used as the main point reference in this section. A few other references on the subject include \eg
\cite{Arens1961,DijksmaDeSnoo1974,DijksmaDeSnoo1987,LangerTextorius1977}. 

For the purposes of this paper, it is sufficient to consider subspaces of the product of a complex Hilbert space with itself. Hence, consider the following definition, \cf \cite[pg. 12]{BehrndtHassiDeSnoo2020}.

\begin{define}
\label{def:linear_relation}
Let $(\hilb,\langle\cdot,\cdot\rangle_\hilb)$ be a complex Hilbert space. A \emph{linear relation} $A$ in $\hilb$ is a linear subspace $A \subseteq \hilb \times \hilb$, \ie for every $\{x_1,y_1\},\{x_2,y_2\} \in A$ and every $\alpha,\beta \in \C$ it holds that $\{\alpha x_1+ \beta x_2,\alpha y_1+ \beta y_2\} \in A$.
\end{define}

For any linear relation $A$ in $\hilb$, the following four sets are of fundamental importance:
$$\begin{array}{RRCL}
\text{domain:} & \Dom(A) & := & \{x\in \hilb~|~\exists\,y \in \hilb: \{x,y\} \in A\}, \\
\text{range:} & \Ran(A) & := & \{y\in \hilb~|~\exists\,x \in \hilb: \{x,y\} \in A\}, \\ 
\text{kernel:} & \ker(A) & := & \{x\in \hilb~|~\{x,0\} \in A\}, \\
\text{multi-valued part:} & \Mult(A) & := & \{y\in \hilb~|~\{0,y\} \in A\}.
\end{array}$$
The closure of the domain of $A$ is denoted by $\bar{\Dom(A)}$ and refers to the closure of $\Dom(A) \subseteq \hilb$ in the topology induced by the inner product on $\hilb$. The closures of the other three sets are defined analogously.

If $B$ is a linear operator on $\hilb$ in the usual sense, it is, in this setting, identified with its graph $\Gamma(B) := \{\{x,y\} \in \hilb \times \hilb~|~Bx=y\}$. More generally, a linear relation $A$ equals the graph of some linear operator, and may hence be identified with said operator, if and only if $\Mult(A) = \{0\}$ \cite[pg. 12]{BehrndtHassiDeSnoo2020}.

A linear relation $A$ is called \emph{bounded} is there exists a constant $C \geq 0$ such that $\|y\|_\hilb \leq C\,\|x\|_\hilb$ for every pair $\{x,y\} \in A$. Note that if $\{0,y\} \in A$ and $A$ is bounded, then $\|y\|_\hilb \leq 0$, implying that $\Mult(A) = \{0\}$ in this case. Furthermore, the relation $A$ is called \emph{closed} if it is closed as a subspace of $\hilb \times \hilb$.

For any linear relation $A$, its \emph{inverse} is the linear relation $A^{-1}$ defined by the equivalence $\{x,y\} \in A^{-1} \iff \{y,x\} \in A$. Furthermore, given a number $\theta \in \C$, the \emph{resolvent relation} is the relation $(A-\theta)^{-1}$. Here, as often, it is implied that $(A - \theta)^{-1} := (A - \theta\,\Id)^{-1}$ where $\Id$ denotes the identity relation on $\hilb$. In connection to resolvent relations, we also define the \emph{resolvent set} $\rho(A)$ of a linear relation $A$ as
$$\rho(A) := \{\theta \in \C~|~(A-\theta)^{-1}\:\text{is bounded and}\:\bar{\Ran(A-\theta)} = \hilb\},$$
\cf \cite[Def. 1.2.3]{BehrndtHassiDeSnoo2020}. Different resolvent relations of $A$ are connected via the following identity, \cf \cite[Thm. 1.2.6 and Cor. 1.2.7]{BehrndtHassiDeSnoo2020}.

\begin{lemma}[Resolvent identity]
\label{lem:resolvent_identity}
Let $A$ be a closed linear relation on $\hilb$. Then, $\rho(A)$ is an open subset of $\C$ and for every $\theta,\tau \in \rho(A)$ it holds that
\begin{equation}
    \label{eq:resolvent_identity}
    (A-\theta)^{-1} - (A-\tau)^{-1} = (\theta-\tau)(A-\theta)^{-1}(A-\tau)^{-1}.
\end{equation}
Furthermore, the function
$$\theta \mapsto (A-\theta)^{-1}$$
is a holomorphic as a Banach space valued function, \cf \cite[Chap. 4]{Dineen1981}.
\end{lemma}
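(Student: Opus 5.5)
The plan is to prove the three assertions in order: openness of $\rho(A)$, the identity \eqref{eq:resolvent_identity}, and holomorphy. Everything rests on one preliminary fact, which I would establish first. If $A$ is closed and $\theta\in\rho(A)$, then $(A-\theta)^{-1}$ is in fact an everywhere defined bounded operator on $\hilb$.

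For this preliminary fact, note that $A-\theta = \{\{x,y-\theta x\} : \{x,y\}\in A\}$ is closed whenever $A$ is, since it is the image of $A$ under a homeomorphism of $\hilb\times\hilb$. Hence $(A-\theta)^{-1}$ is closed as well. It is bounded by assumption, so $\Mult((A-\theta)^{-1})=\{0\}$ and it is the graph of a bounded operator on $\Dom((A-\theta)^{-1})=\Ran(A-\theta)$. A closed bounded operator has a closed domain, so $\Ran(A-\theta)$ is closed. It is also dense, and therefore equals $\hilb$. Write $R_\theta := (A-\theta)^{-1}\in\mathcal B(\hilb)$.

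For the resolvent identity, take $h\in\hilb$ and set $x=R_\tau h$, so that $\{x,h+\tau x\}\in A$. Also set $u = R_\theta x$, so that $\{u,x+\theta u\}\in A$. Put $v=R_\theta h$, so that $\{v,h+\theta v\}\in A$. By linearity of $A$ we have
$$\{x - v + (\theta-\tau)u,\; (h+\tau x) - (h+\theta v) + (\theta-\tau)(x+\theta u)\}\in A.$$
A direct computation shows that the second component equals $\theta\,(x - v + (\theta-\tau)u)$. So the pair $\{x-v+(\theta-\tau)u,0\}$ lies in $A-\theta$. Since $\ker(A-\theta)=\Mult(R_\theta)=\{0\}$, we get $v - x = (\theta-\tau)u$. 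This is exactly $R_\theta h - R_\tau h = (\theta-\tau)R_\theta R_\tau h$. The only care needed here is the bookkeeping with pairs in the relation, replacing operator algebra.

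For openness, fix $\tau\in\rho(A)$ and let $|\theta-\tau|<\|R_\tau\|^{-1}$. Define $S := R_\tau(\Id-(\theta-\tau)R_\tau)^{-1}$ via the Neumann series, so $S$ is bounded and everywhere defined. I would verify that $S$ coincides with $R_\theta$ as relations. First, for every $h$ the pair $\{Sh, h+\theta Sh\}$ lies in $A$. To see this, put $g=(\Id-(\theta-\tau)R_\tau)^{-1}h$. Then $\{R_\tau g, g+\tau R_\tau g\}\in A$, and $g+\tau R_\tau g = h + \theta R_\tau g$. This gives $\Ran(A-\theta)=\hilb$ and $S\subseteq R_\theta$. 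Second, $\ker(A-\theta)=\{0\}$. Indeed, if $\{x,\theta x\}\in A$, then $\{x,(\theta-\tau)x\}\in A-\tau$, so $x=(\theta-\tau)R_\tau x$, which forces $x=0$ by the norm bound. Hence $R_\theta=S$ is bounded, $\theta\in\rho(A)$, and $\rho(A)$ is open.

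Holomorphy then follows from the Neumann series expansion
$$R_\theta=\sum_{k\ge0}(\theta-\tau)^kR_\tau^{k+1},$$
which is a norm-convergent power series in $\theta-\tau$ with coefficients in $\mathcal B(\hilb)$, and is therefore holomorphic in the Banach space sense. Alternatively, the resolvent identity together with continuity gives the derivative $R_\theta^2$ directly.

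I expect the main obstacle to be the preliminary fact, namely that bounded plus dense range together with closedness yields an everywhere defined operator. Once that is in place, the remaining steps reduce to the classical operator proofs, with relation membership replacing operator algebra.
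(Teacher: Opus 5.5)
Your proof is correct. The key computations check out: the second component is $\theta\,(x-v+(\theta-\tau)u)$, the conclusion follows from $\ker(A-\theta)=\Mult((A-\theta)^{-1})=\{0\}$, and the verification that the Neumann-series operator $S$ coincides with $(A-\theta)^{-1}$ is sound.

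The paper gives no argument of its own for this lemma; it defers to \cite[Thm.~1.2.6 and Cor.~1.2.7]{BehrndtHassiDeSnoo2020}. What you have written is therefore a self-contained replacement for a citation, along the standard Neumann-series lines.

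Your preliminary step is more than a convenience. The paper defines $\rho(A)$ by requiring only that $(A-\theta)^{-1}$ be bounded and that $\Ran(A-\theta)$ be dense. Yet it then treats $(A-\theta)^{-1}$ as an everywhere-defined bounded operator, both in Lemma~\ref{lem:self_adjoint} and when asserting that \eqref{eq:mobious_expression} is well defined. Your closed-graph observation is exactly what reconciles the two, and it is the one place where closedness of $A$ enters.

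Conversely, the citation buys generality your argument does not have. As the paper notes right after the lemma, the reference proves a version of the identity, as an identity between relations, for arbitrary (not necessarily closed) relations and for points outside $\rho(A)$. Your element-chasing needs $(A-\theta)^{-1}$ and $(A-\tau)^{-1}$ to be everywhere defined and $A-\theta$ to be injective, so it genuinely uses both closedness and $\theta,\tau\in\rho(A)$.
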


We note that a variant of identity \eqref{eq:resolvent_identity} can also be stated for non-closed relations as well as points not necessarily in the resolvent set, \cf \cite[Prop. 1.1.7 and Thm 1.2.6]{BehrndtHassiDeSnoo2020}. Finally, let us recall the definition of the adjoint relation, \cf \cite[Def. 1.3.1]{BehrndtHassiDeSnoo2020}.

\begin{define}
\label{def:adjoint_relation}
Let $A$ be a linear relation in $\hilb$. Then, its \emph{adjoint relation} $A^*$ is the linear relation defined via the equivalence
$$\{v,w\} \in A^* \iff \forall\,\{x,y\}\in A:\:\langle y,v \rangle_\hilb = \langle x,w \rangle_\hilb.$$
Moreover, the relation $A$ is called \emph{self-adjoint} if $A = A^*$ as subsets of $\hilb \times \hilb$.
\end{define}

The following properties of self-adjoint relations are going to be used later on, \cf \cite[Sec. 1.3 and 1.5]{BehrndtHassiDeSnoo2020}.

\begin{lemma}
\label{lem:self_adjoint}
Let $A$ be a self-adjoint relation on $\hilb$. Then, $A$ is necessarily closed, it holds that $\C\setminus\R \subseteq \rho(A)$ and for each $z \in \rho(A)$ the resolvent relation $(A-z)^{-1}$ is a bounded everywhere-defined linear operator.
\end{lemma}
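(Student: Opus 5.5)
The plan is to derive each claim from the definition $A = A^*$ together with Lemma \ref{lem:resolvent_identity}. The key input is a symmetry inequality: for every $\{x,y\} \in A$ and every $z \in \C$,
$$\|y - zx\|_\hilb\,\|x\|_\hilb \;\geq\; |\Im[z]|\,\|x\|_\hilb^2 .$$

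\emph{Closedness.} An adjoint relation is always closed. Indeed, $A^*$ is defined by the conditions $\langle y,v\rangle_\hilb = \langle x,w\rangle_\hilb$, one for each $\{x,y\} \in A$. Each condition is continuous and linear in the pair $\{v,w\}$, so $A^*$ is an intersection of closed subspaces of $\hilb \times \hilb$. Since $A = A^*$, the relation $A$ is closed.

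\emph{Boundedness of the resolvent.} Taking $\{v,w\} = \{x,y\} \in A = A^*$ in the definition gives $\langle y,x\rangle_\hilb = \langle x,y\rangle_\hilb$, so $\langle y,x\rangle_\hilb \in \R$. Hence, for $z \in \cut$,
$$\Im\langle y - zx, x\rangle_\hilb = -\Im[z]\,\|x\|_\hilb^2 ,$$
and the Cauchy--Schwarz inequality yields $\|y - zx\|_\hilb \geq |\Im[z]|\,\|x\|_\hilb$. The pairs of $(A-z)^{-1}$ are exactly $\{y - zx, x\}$ with $\{x,y\} \in A$. The inequality therefore says that $(A-z)^{-1}$ is bounded, with constant $|\Im[z]|^{-1}$. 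In particular it has trivial multi-valued part, so it is a bounded operator on its domain $\Ran(A-z)$.

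\emph{Density and closedness of the range.} This is the main step. The range $\Ran(A-z)$ is closed, since $A$ is closed and we have the lower bound above: a Cauchy sequence $y_k - zx_k$ forces $x_k$, and hence $y_k$, to be Cauchy, and closedness of $A$ puts the limit back in $\Ran(A-z)$. For density, take $u \perp \Ran(A-z)$. Then $\langle y,u\rangle_\hilb = \langle x,\bar{z}u\rangle_\hilb$ for all $\{x,y\} \in A$, which means $\{u,\bar{z}u\} \in A^* = A$. Applying the inequality with $\bar{z}$ to the pair $\{u,\bar{z}u\}$ gives $0 \geq |\Im[z]|\,\|u\|_\hilb$, so $u = 0$. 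Thus $\Ran(A-z) = \hilb$, and therefore $z \in \rho(A)$. This shows $\cut \subseteq \rho(A)$.

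\emph{Everywhere-defined resolvent on all of $\rho(A)$.} For a general $z \in \rho(A)$, possibly real, we use the definition of $\rho(A)$ directly. The resolvent $(A-z)^{-1}$ is bounded, so it is single-valued and Lipschitz on its domain $\Ran(A-z)$. Since $A$ is closed, so is $(A-z)^{-1}$. A closed bounded operator has closed domain, so $\Ran(A-z) = \bar{\Ran(A-z)} = \hilb$. Hence $(A-z)^{-1}$ is a bounded, everywhere-defined linear operator.

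I expect the range/density argument to be the main obstacle, because it is the one place where the full equality $A = A^*$ is used and not merely symmetry.
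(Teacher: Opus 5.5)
Your proof is correct and complete. The paper gives no argument of its own, only a citation to \cite[Sec.~1.3 and 1.5]{BehrndtHassiDeSnoo2020}, and your steps are the standard route taken there: closedness of adjoints, the symmetry estimate $\|y-zx\|_\hilb\geq|\Im[z]|\,\|x\|_\hilb$, reading $u\perp\Ran(A-z)$ as $\{u,\bar{z}u\}\in A^*$, and the closed domain of a closed bounded operator. (One harmless imprecision: the conditions defining $A^*$ are conjugate-linear rather than linear in $\{v,w\}$, which does not affect the conclusion that $A^*$ is a closed subspace.)
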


\section{The one variable case}
\label{sec:one_variable_case}

As mentioned in the introduction, a classical result dating back to the 1920s establishes an equivalence between the analytic properties of a function $q\colon\C \to \C^+$, its integral representation in terms a certain Borel measure, and its operator representation in terms of a certain self-adjoint operator. Using the language and notation that is used throughout the rest of the paper, we summarize, for a general Herglotz-Nevanlinna function, the corresponding results in the following theorem, \cf \cite{Cauer1932,KacKrein1974,KreinLanger1977,LangerTextorius1977,Nevanlinna1922}.

\begin{thm}\label{thm:all_rep_1var}
The following statements concerning a function $q \colon \C^+ \to \C$ are equivalent.
\begin{itemize}
    \item[(a)]{The function $q$ is a Herglotz-Nevanlinna function, \ie it is holomorphic with non-negative imaginary part.}
    \item[(b)]{There exists a number $a \in \R$, a number $b \geq 0$, and a positive Borel measure $\mu$ on $\R$ with $\int_\R(1+t^2)^{-1}\diff\mu(t) < \infty$ such that it holds, for all $z \in \C^+$, that
    \begin{equation}\label{eq:intRep_1var}
        q(z) = a + b\,z + \frac{1}{\pi}\int_\R\left(\frac{1}{t-z}-\frac{t}{1+t^2}\right)\diff\mu(t).
    \end{equation}
    }
    \item[(c)]{There exists a Hilbert space $(\hilb,\langle\cdot,\cdot\rangle_\hilb)$ and a self-adjoint linear
  relation $A$ in $\hilb$ such that to every $w \in \C^+$ corresponds
  an element $v_{w} \in \hilb$ such that it holds, for all
  $z \in \C^+$, that
\begin{equation}
    \label{eq:opRep_1var}
    q(z) = \bar{q(w)} + (z - \bar{w})\big\langle(\Id + (z - w)(A - z)^{-1})v_w,v_w\big\rangle_\hilb.
  \end{equation}
  }
\end{itemize}
\end{thm}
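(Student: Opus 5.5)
The plan is to prove the cycle (a)$\Rightarrow$(b)$\Rightarrow$(c)$\Rightarrow$(a). The implication (a)$\Rightarrow$(b) is the classical Nevanlinna representation theorem, which I will take from the cited literature. One sketch is to map $\C^+$ to the unit disc by a Cayley transform and apply the Riesz--Herglotz representation. The resulting measure on the circle is then pulled back to $\R$; its possible atom at the point corresponding to $\infty$ produces the linear term $b\,z$.

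For (b)$\Rightarrow$(c), I will build the space and relation concretely.
\begin{itemize}
\item Take $\hilb_0 := \Leb^2(\R,\mu/\pi)$ and let $A_0$ be the self-adjoint operator of multiplication by $t$ on its maximal domain.
\item To absorb the term $b\,z$, set $\hilb := \hilb_0 \oplus \C$ and $A := A_0 \oplus (\{0\}\times\C)$. The second summand is the purely multi-valued relation, which is self-adjoint in $\C$.
\item For the resolvent this gives $(A-z)^{-1} = (t-z)^{-1} \oplus 0$.
\item Put $v_w := \big((t-w)^{-1}, \sqrt{b}\big)$. The first component lies in $\hilb_0$ because $\int(1+t^2)^{-1}\diff\mu<\infty$.
\end{itemize}
Next I compute $(\Id+(z-w)(A-z)^{-1})v_w$. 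On the first component, $1+(z-w)(t-z)^{-1} = (t-w)(t-z)^{-1}$, so the component becomes $(t-z)^{-1}$; the second component stays $\sqrt b$. The right-hand side of \eqref{eq:opRep_1var} is therefore
\begin{equation*}
\bar{q(w)} + (z-\bar w)\Big(\frac1\pi\int_\R\frac{\diff\mu(t)}{(t-z)(t-\bar w)} + b\Big).
\end{equation*}
The identity
\begin{equation*}
\frac{z-\bar w}{(t-z)(t-\bar w)} = \frac{1}{t-z}-\frac{1}{t-\bar w}
\end{equation*}
turns this into $\bar{q(w)}+b(z-\bar w)+\frac1\pi\int_\R\big(\frac1{t-z}-\frac1{t-\bar w}\big)\diff\mu(t)$. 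Inserting \eqref{eq:intRep_1var} for $\bar{q(w)}$ makes the $a$ and $t/(1+t^2)$ terms cancel, and the result is exactly $q(z)$. The main care point is justifying that the pieces of the integral are finite, which follows from the integrability of $(1+t^2)^{-1}$ against $\mu$.

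For (c)$\Rightarrow$(a), fix any $w\in\C^+$.
\begin{itemize}
\item \emph{Holomorphy.} By Lemma \ref{lem:self_adjoint}, $\C^+\subseteq\rho(A)$ and the resolvent is a bounded everywhere-defined operator. By Lemma \ref{lem:resolvent_identity} it is holomorphic in $z$, so the right-hand side of \eqref{eq:opRep_1var} is holomorphic in $z$.
\item \emph{Positivity.} Set $u_z := (\Id+(z-w)(A-z)^{-1})v_w$. By the resolvent identity, $u_z = (A-w)(A-z)^{-1}v_w$ in the relation sense, so $(z-\bar w)\langle u_z,v_w\rangle$ should equal $(z-\bar z)\|u_z\|^2 + \overline{(w-\bar w)\langle\cdot\rangle}$-type terms.
\end{itemize}
A cleaner route for positivity is to evaluate \eqref{eq:opRep_1var} at $z=w$, where $u_w=v_w$, giving $q(w)=\bar{q(w)}+(w-\bar w)\|v_w\|^2$. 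Hence $\Im q(w)=\Im w\,\|v_w\|^2\ge 0$ for every $w\in\C^+$, which finishes the argument.

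I expect the main obstacle to be bookkeeping with relations in (b)$\Rightarrow$(c): confirming that $A$ is self-adjoint, and checking that the resolvent formula holds in the relation sense when $b>0$. Everything else is direct computation.
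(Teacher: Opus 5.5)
Your proposal is correct and follows essentially the same route as the paper. The cycle (a)$\Rightarrow$(b)$\Rightarrow$(c)$\Rightarrow$(a) uses the classical integral representation, the multiplication operator on $\Leb^2(\R,\mu)$ extended by the purely multi-valued relation on $\C$ to absorb the term $b\,z$, and positivity read off by setting $z=w$. The only difference is cosmetic: you put $\sqrt{b}$ into the second component of $v_w$ instead of weighting the inner product on $\C$ by $b$, which lets you treat $b=0$ and $b>0$ together. You can also drop the vague first ``Positivity'' bullet, since evaluating at $z=w$ already suffices.
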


Regarding the integral representation \eqref{eq:intRep_1var}, various generalization for functions of several variables could first be found in \eg \cite{KoranyiPukanszky1963,Vladimirov1969,Vladimirov1979}. The most direct possible analogue to representation \eqref{eq:intRep_1var} in terms of its form and the objects used in said representation can be found in \cite[Thm. 3.1]{LugerNedic2017} for the case of functions of two variables, and \cite[Thm. 4.1]{LugerNedic2019} which covers the general several variable case.

For functions of one variable, various operator representation results are often written for larger classes of functions, see \eg \cite[Satz 1.4]{KreinLanger1977} and \cite[Thm. 2.2]{LangerTextorius1977}, with representation \eqref{eq:opRep_1var} being a special case of these for ordinary Herglotz-Nevanlinna functions in particular. For function of several variables, a generalization can be found in \cite[Thm. 1.9]{AglerEtal2016} (\cf Theorem \ref{thm:loewner_rep}). However, the most direct possible analogue to representation \eqref{eq:opRep_1var} in terms of its form and the objects used in said representation is Theorem \ref{thm:opRep_Nvar} later on (see also Section \ref{sec:loewner_rep} for further discussions and comparisons).

Given a Herglotz-Nevanlinna function $q$ in one variable, it turns out that the Hilbert space $\hilb$, the linear relation $A$, the number $w$ and the vector $v \in \hilb$ can be specified explicitly, with one option being with the help of the representing parameters $(a,b,\mu)$ from representation \eqref{eq:intRep_1var}. However, to represent a functions whose integral representation \eqref{eq:intRep_1var} has a non-zero linear term $b$, self-adjoint relations must be used in place of self-adjoint operators. To compare, we note that when $b=0$, the Hilbert space in representation \eqref{eq:opRep_1var} can be chosen as $\hilb = \Leb^2(\R,\mu)$, equipped with
$$\langle f_1,f_2 \rangle_\hilb := \frac{1}{\pi}\int_\R f_1(t)\bar{f_2(t)}\diff t.$$
The linear relation $A$ (which is a regular linear operator in this case) can be chosen as $(Af)(t) := t\:f(t)$ on the domain
$$\Dom(A) := \{f \in \Leb^2(\R,\mu)~|~t \mapsto t\:f(t) \in \Leb^2(\R,\mu)\}.$$
Finally, given $w \in \C$ the element $v_w \in \hilb$ can be chosen as $v_w \colon t \mapsto (t-w)^{-1}$. With these choices, one can verify that writing out representation \eqref{eq:opRep_1var} reproduces representation \eqref{eq:intRep_1var} in this case.

On the other hand, if $b > 0$, the Hilbert space should instead be chosen as $\hilb = \Leb^2(\R,\mu) \oplus \C$, equipped with the inner product
$$\left\langle\relone{f_1}{\xi_1},\relone{f_2}{\xi_2}\right\rangle_\hilb := \frac{1}{\pi}\int_\R f_1(t)\bar{f_2(t)}\diff t + b\,\xi_1\,\bar{\xi_2}.$$
The linear relation $A$ is then chosen as
$$\relpair{f}{\xi}{g}{\eta} \in A \iff g(t) = t\:f(t) \quad\wedge\quad \xi = 0,$$
and can be verified to be self-adjoint directly by definition. Finally, given $w \in \C$ the element $v_w \in \hilb$ can be chosen as 
$$v := \relone{t \mapsto (t-w)^{-1}}{1} \in \hilb.$$
As in the previous case, a direct computation made by writing out representation \eqref{eq:opRep_1var} in terms of these choices reproduces representation \eqref{eq:intRep_1var}. Finally, note that the value of the parameter $b$ can be obtained purely from the properties of the function $q$, namely
$$b = \lim\limits_{z \ntto \infty}\frac{q(z)}{z},$$
where $z \ntto \infty$ denotes the limit $|z| \to \infty$ taken in any upper Stoltz domain \cite{KacKrein1974}.

The type of construction as described above would constitute the proof of the implication (b) $\Rightarrow$ (c) of Theorem \ref{thm:all_rep_1var}. The proof that (c) $\Rightarrow$ (a) amounts to checking that the imaginary part of a function defined by representation \eqref{eq:opRep_1var} is non-negative, while the holomorphicity of such a function follows from Lemma \ref{lem:resolvent_identity}. Finally, the implication (a) $\Rightarrow$ (b) is (half of) the classical integral representation theorem for Herglotz-Nevanlinna functions, \cf \cite{KacKrein1974}.

\section{Herglotz-Nevanlinna functions in several variables}
\label{sec:about_HN_functions}

In this section, we review the facts and theorems about Herglotz-Nevanlinna functions in several variables that are prerequisite for the upcoming results. First, the focus lies on the integral representation theorem, and later on the representation using positive semi-definite functions.

\subsection{Integral representation theorem}
\label{subsec:int_rep}

A fundamental tool in the study of Herglotz-Nevanlinna functions is their integral representation theorem, the statement of which requires us to first introduce some notation. Given ambient numbers $z \in \cut$ and $t \in \R$, we define first the expressions
\begin{multline*}
    N_{-1}(z,t) := \frac{1}{2\,\I}\left(\frac{1}{t - z} - \frac{1}{t - \I}\right), \quad N_{0}(z,t) := \frac{1}{2\,\I}\left(\frac{1}{t - \I} - \frac{1}{t + \I}\right), \\
    N_{1}(z,t) := \frac{1}{2\,\I}\left(\frac{1}{t + \I} - \frac{1}{t - \bar{z}}\right).
\end{multline*}
Note that $N_0(z,t) \in \R$ and is independent of $z \in \cut$, while
$$\bar{N_{-1}(z,t)} = N_{1}(z,t)$$
for all $z \in \cut$ and $t \in \R$. Define now the kernel $K_n \colon \cutN \times \R^n \to \C^n$ as
\begin{multline}\label{eq:kernel_Kn}
    K_n(\vec{z},\vec{t}) := \I\bigg(2\prod_{j=1}^n\big(N_{-1}(z_j,t_j)+N_{0}(\I,t_j)\big)-\prod_{j=1}^nN_{0}(\I,t_j)\bigg) \\
    = \I\left(\frac{2}{(2\,\I)^n}\prod_{j=1}^n\left(\frac{1}{t_j-z_j}-\frac{1}{t_j+\I}\right)-\frac{1}{(2\,\I)^n}\prod_{j=1}^n\left(\frac{1}{t_j-\I}-\frac{1}{t_j+\I}\right)\right).
\end{multline}

We may now recall the integral representation theorem for Herglotz-Nevanlinna functions of several variables, \cf \cite[Thm. 4.1]{LugerNedic2019}.

\begin{thm}\label{thm:intRep_Nvar}
A function $q\colon \C^{+n} \to \C$ is a Herglotz-Nevanlinna function if and only if $q$ can be written as
\begin{equation}\label{eq:intRep_Nvar}
q(\vec{z}) = a + \sum_{\ell=1}^nb_\ell z_\ell + \frac{1}{\pi^n}\int_{\R^n}K_n(\vec{z},\vec{t})\diff\mu(\vec{t}),
\end{equation}
where $a \in \R$, $\vec{b} \in [0,\infty)^n$ and $\mu$ is a positive Borel measure on $\R^n$ satisfying the growth condition
\begin{equation}
\label{eq:measure_growth}
\int_{\R^n}\prod_{\ell=1}^n\frac{1}{1+t_\ell^2}\diff\mu(\vec{t}) < \infty
\end{equation}
and the Nevanlinna condition
\begin{equation}
\label{eq:measure_Nevan}
\sum_{\substack{\vec{\rho} \in \{-1,0,1\}^n \\ -1\in\vec{\rho} \wedge 1\in\vec{\rho}}}\int_{\R^n}\prod_{j=1}^nN_{\rho_j}(z_j,t_j)\diff\mu(\vec{t}) = 0
\end{equation}
for all $\vec{z} \in \C^{+n}$. Furthermore, for a given function $q$, the triple of representing parameters $(a,\vec{b},\mu)$ is unique.
\end{thm}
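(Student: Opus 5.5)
The plan follows the classical one-variable strategy. I would transfer the problem to the unit polydisk, apply a Herglotz-type representation there, and pull the result back. The implication "representation $\Rightarrow$ Herglotz-Nevanlinna" is then a direct computation. Uniqueness comes from inversion formulas.

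\emph{Sufficiency.} Assume $q$ is given by \eqref{eq:intRep_Nvar}.
\begin{itemize}
\item \emph{Holomorphy.} For $\vec z$ in a compact subset of $\C^{+n}$, each factor $\frac{1}{t_j-z_j}-\frac{1}{t_j+\I}$ is bounded by $C(1+t_j^2)^{-1/2}$. Hence $|K_n(\vec z,\vec t)|\le C\prod_j(1+t_j^2)^{-1/2}$. This bound alone is not integrable under \eqref{eq:measure_growth}, so the plan is to use the \emph{difference} structure of $K_n$: expanding the product, the terms that are not of the form $\prod_j(1+t_j^2)^{-1}$ are exactly the mixed terms. I would rewrite $K_n$ as a sum of products $\prod_j N_{\rho_j}(z_j,t_j)$ and use $|N_{\pm1}(z,t)|\lesssim (1+t^2)^{-1/2}$ and $N_0(\I,t)=(1+t^2)^{-1}$. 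The integrand is then controlled by $C\prod_j(1+t_j^2)^{-1}$, possibly after first justifying the formula through truncated integrals. Holomorphy then follows from differentiation under the integral (Morera plus dominated convergence).
\item \emph{Positivity.} Compute $\Im K_n=\frac{1}{2\I}(K_n-\bar{K_n})$ using $\bar{N_{-1}}=N_1$. Expanding the products gives $\sum_{\vec\rho}\prod_j N_{\rho_j}$ over all sign patterns. The patterns containing both $-1$ and $1$ integrate to zero by the Nevanlinna condition \eqref{eq:measure_Nevan}. The remaining patterns, after integration, recombine into $\int\prod_j\mathcal P(z_j,t_j)\,\diff\mu$, where $\mathcal P(z,t)=\Im z/|t-z|^2\ge0$ is the Poisson kernel, up to the same combinatorial factor. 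Together with $\Im(b_\ell z_\ell)\ge0$ this gives $\Im q\ge0$. I expect this to be a finite sign-pattern bookkeeping exercise.
\end{itemize}

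\emph{Necessity.} Let $q$ be Herglotz-Nevanlinna.
\begin{itemize}
\item \emph{Transfer to the polydisk.} Apply the Cayley transform $z_j=\I\frac{1+w_j}{1-w_j}$ coordinatewise and set $f(\vec w)=-\I\,q(\vec z)$, which is holomorphic on $\mathbb D^n$ with $\Re f\ge0$.
\item \emph{Herglotz measure on the torus.} The measures $\Re f(r\vec\zeta)\,\diff m(\vec\zeta)$ have mass $\Re f(0)$, so a subsequence converges weak-$*$ to a positive measure $\nu$ on $\mathbb T^n$. Since $\Re f$ is pluriharmonic, the Fourier coefficients $\hat\nu(\vec k)$ vanish whenever $\vec k$ has both strictly positive and strictly negative entries. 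Comparing power series then gives
$$f(\vec w)=\I\,\Im f(0)+\int_{\mathbb T^n}\Big(2\prod_j\frac{1}{1-w_j\bar\zeta_j}-1\Big)\diff\nu(\vec\zeta).$$
\item \emph{Pulling back.} Decompose $\mathbb T^n$ according to which coordinates equal $1$, i.e.\ which points correspond to $t_j=\infty$. On $(\mathbb T\setminus\{1\})^n$, the pullback of $\nu$, weighted by $\prod_j(1+t_j^2)$, gives $\mu$. The growth condition \eqref{eq:measure_growth} comes from finiteness of $\nu$. The Fourier vanishing becomes precisely the Nevanlinna condition \eqref{eq:measure_Nevan}, after translating $\bar\zeta^{k}$ into the $N_{\pm1}$ terms. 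The kernel becomes $K_n$, and $a=\Re q(\I,\dots,\I)$.
\end{itemize}

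\emph{Main obstacle: the boundary strata.} The hardest step is analysing $\nu$ on the strata where some $\zeta_j=1$. I would argue as follows. Restricting the vanishing of the mixed Fourier coefficients to such a stratum forces $\nu$ to have a specific product form there. On a stratum where exactly the coordinate $\zeta_\ell$ equals $1$, $\nu$ must be $\delta_1$ in that variable times normalized Lebesgue measure in the others. Such a part contributes exactly $b_\ell z_\ell$ with $b_\ell\ge0$. Strata with two or more coordinates at $1$ should carry no mass, or only mass that is absorbed into $\mu$. The first thing I would try is testing the vanishing coefficients against trigonometric polynomials supported near $\zeta_\ell=1$ and letting them concentrate there.

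\emph{Uniqueness.} The parameters are recovered from $q$:
\begin{itemize}
\item $a=\Re q(\I,\dots,\I)$;
\item $b_\ell=\lim_{y\to\infty}q(\I,\dots,\I y,\dots,\I)/(\I y)$, with $\I y$ in the $\ell$-th slot;
\item $\mu$ is the weak limit, as $y\downarrow0$, of $\pi^{n}\,\Im q(\vec x+\I y\vec 1)\,\diff\vec x$ modulo the Poisson-type identity for $\Im q$ obtained in the sufficiency part. This is a Stieltjes inversion argument.
\end{itemize}
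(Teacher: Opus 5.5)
The paper gives no proof of this theorem; it is quoted from \cite[Thm.~4.1]{LugerNedic2019}. Your route is the standard one: Cayley transform, a Kor\'anyi--Puk\'anszky representation on $\mathbb{D}^n$ by a measure $\nu$ with vanishing mixed Fourier coefficients, pull-back to $\R^n$, and separate treatment of the mass where some $\zeta_j=1$. The positivity computation, the polydisk formula, the pull-back of the kernel and the uniqueness formulas are fine, up to the slips noted below.

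The genuine gap is the step you yourself single out, which carries the whole content of the necessity direction. The natural reading of your device does not work: any real trigonometric polynomial concentrating at $\zeta_\ell=1$ (e.g.\ a Fej\'er kernel) contains both $\zeta_\ell^{k}$ and $\zeta_\ell^{-k}$. It therefore picks up non-mixed coefficients, about which the hypothesis says nothing. What works is a \emph{one-sided} Ces\`aro mean whose sign is adapted to the test monomial. Write $\hat\nu(\vec k)=\int\bar{\vec\zeta}^{\,\vec k}\,\diff\nu$ and $E_\ell=\{\zeta_\ell=1\}$. Let $\sigma_\ell$ be the push-forward of $\nu|_{E_\ell}$ to $\mathbb{T}^{n-1}$ (drop coordinate $\ell$), and fix $\vec m\in\mathbb{Z}^{n-1}\setminus\{\vec 0\}$. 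If $\vec m$ has a positive entry, multiply $\bar{\vec\zeta'}^{\,\vec m}$ by $\frac1N\sum_{k=0}^{N-1}\zeta_\ell^{k}$; otherwise multiply by $\frac1N\sum_{k=0}^{N-1}\bar{\zeta_\ell}^{\,k}$. Both means tend boundedly to the indicator of $E_\ell$, so dominated convergence gives $\hat\sigma_\ell(\vec m)=\lim_N\frac1N\sum_{k=0}^{N-1}\hat\nu(\vec m,\mp k)$, with $\mp k$ in slot $\ell$. For $k\geq1$ every such index is mixed, so only the $k=0$ term survives, and it is $O(1/N)$. Hence, with $m_{n-1}$ normalized Lebesgue measure,
\[
\nu|_{E_\ell}=b_\ell\,\delta_1\otimes m_{n-1},\qquad b_\ell:=\nu(E_\ell)\geq 0 .
\]
This settles everything at once:
\begin{itemize}
\item It gives $\nu(E_j\cap E_\ell)=0$ for $j\neq\ell$, so the higher strata are empty. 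Absorbing their mass into $\mu$ was never an option, since $\mu$ lives on $\R^n$.
\item The measure $\delta_1\otimes m_{n-1}$ has non-zero Fourier coefficients only on $\mathbb{Z}\vec e_\ell$. So the interior part of $\nu$ still has vanishing mixed coefficients, which your pull-back step tacitly needs to get the Nevanlinna condition for $\mu$.
\item This piece contributes $b_\ell\frac{1+w_\ell}{1-w_\ell}=-\I\,b_\ell z_\ell$ to $f$, i.e.\ exactly $b_\ell z_\ell$ to $q=\I f$ with no constant. This confirms $a=\mathrm{Re}\,q(\I\vec 1)$.
\end{itemize}

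Second, your holomorphy paragraph rests on a miscalculation. We have $\frac{1}{t-z}-\frac{1}{t+\I}=\frac{z+\I}{(t-z)(t+\I)}$, and $|t-z|\geq c\sqrt{1+t^2}$ for $z$ in a compact subset of $\C^+$. So each factor, and likewise $N_{\pm1}(z,t)$, is $O\bigl((1+t^2)^{-1}\bigr)$ locally uniformly, not $O\bigl((1+t^2)^{-1/2}\bigr)$. Thus $|K_n(\vec z,\vec t)|\leq C\prod_j(1+t_j^2)^{-1}$ directly, and neither a difference structure nor the Nevanlinna condition is needed. With the bounds you state, the claimed domination would not follow. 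Also, $K_n$ contains no mixed sign patterns at all, since it expands over $\{-1,0\}^n$. Mixed patterns enter only through the exact identity $\Im K_n=\prod_j\mathcal{P}(z_j,t_j)-\sum_{\vec\rho\ \mathrm{mixed}}\prod_jN_{\rho_j}(z_j,t_j)$, with no combinatorial factor. Finally, drop the factor $\pi^n$ in the Stieltjes inversion: $\Im q(\vec x+\I y\vec 1)\,\diff\vec x\to\mu$ vaguely, because $\frac1\pi\frac{y}{(x-t)^2+y^2}$ is the approximate identity.
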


We note that Borel measures on $\R^n$ satisfying conditions \eqref{eq:measure_growth} and \eqref{eq:measure_Nevan} are called \emph{Nevanlinna measures}, \cf \cite{LugerNedic2021,NedicSaksman2024}.
 
\subsection{Representation using positive semi-definite functions}

We recall first the following definition, \cf \cite[pg. 3002]{AglerEtal2016}.

\begin{define}\label{def:positive_semi_def_function}
Let $\Omega \subseteq \C^n$. A function $F\colon \Omega \times \Omega \to \C$ is called \emph{positive semi-definite} if for all $m \geq 1$ and every choice of $m$ vectors $\vec{z}_1,\vec{z}_2,\ldots,\vec{z}_m \in \Omega$ and $m$ numbers $c_1,c_2,\ldots,c_m \in \C$ it holds that
\begin{equation}
    \label{eq:positive_semi_def_function}
    \sum_{i,j = 1}^mF(\vec{z}_i,\vec{z}_j)\,c_i\,\bar{c_j} \geq 0.
\end{equation}
\end{define}

A particular class of positive semi-definite functions that plays an important role later on is the following, \cf \cite[Def. 3.5]{Nedic2021b}.

\begin{define}\label{def:poisson_type}
A function $F\colon \C^{+n} \times \C^{+n} \to \C$ is called a \emph{Poisson-type function} if there exists a positive Borel measure $\mu$ on $\R^n$ satisfying the growth condition \eqref{eq:measure_growth} such that
$$F(\vec{z},\vec{w}) = \frac{1}{\pi^n}\int_{\R^n}\prod_{\ell = 1}^n\frac{1}{(t_\ell - z_\ell)(t_\ell - \bar{w_\ell})}\diff\mu(\vec{t})$$
for all $\vec{z},\vec{w} \in \C^{+n}$. In this case, we say that the function $F$ is given by the measure $\mu$.
\end{define}

Poisson-type functions are readily shown to be positive semi-definite \cite[Lem. 3.6]{Nedic2021b} and the measure corresponding to a particular function is uniquely determined \cite[Lem 3.8]{Nedic2021b}. The following theorem now gives another characterization of Herglotz-Nevanlinna functions, \cf \cite[Thm. 4.1]{Nedic2021b}.

\begin{thm}\label{thm:positive_semi_decomposition}
Let $n \in \N$ and let $q\colon \C^{+n} \to \C$ be a holomorphic function. Then, $q$ is a Herglotz-Nevanlinna function if and only if there exists a vector $\vec{d} \in [0,\infty)^n$ and a Poisson-type function $D$ (given by some measure $\nu$) such that the equality
\begin{equation}
    \label{eq:positive_semi_decomposition}
    q(\vec{z}) - \bar{q(\vec{w})} = \sum_{j=1}^nd_j(z_j - \bar{w_j}) + \frac{1}{(2\,\I)^{n-1}}\prod_{j=1}^n(z_j - \bar{w_j})\cdot D(\vec{z},\vec{w})
\end{equation}
holds for all $\vec{z},\vec{w} \in \C^{+n}$.

Furthermore, let $(a,\vec{b},\mu)$ be the representing parameters of the function $q$ in the sense of Theorem \ref{thm:intRep_Nvar}. Then, one may choose $\vec{d} = \vec{b}$ and $D$ as the Poisson-type function given by the representing measure $\mu$.
\end{thm}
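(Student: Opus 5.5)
The plan is to prove the two implications separately; the forward direction rests on the integral representation of Theorem \ref{thm:intRep_Nvar}, which also yields the "furthermore" part with $\vec d=\vec b$ and $\nu=\mu$.

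\emph{Sufficiency.} Assume $q$ is holomorphic and satisfies \eqref{eq:positive_semi_decomposition}. Put $\vec w=\vec z$. The left-hand side becomes $2\I\,\Im[q(\vec z)]$. On the right-hand side, $z_j-\bar{z_j}=2\I\,\Im[z_j]$, so
$$2\I\,\Im[q(\vec z)] = 2\I\sum_j d_j\Im[z_j] + 2\I\prod_j\Im[z_j]\cdot D(\vec z,\vec z).$$
Since $D$ is positive semi-definite, $D(\vec z,\vec z)\geq 0$. Alternatively, this is visible directly from Definition \ref{def:poisson_type}, because the integrand is then $\prod_\ell |t_\ell-z_\ell|^{-2}$. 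Hence $\Im[q]\geq 0$, and $q$ is Herglotz-Nevanlinna.

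\emph{Necessity.} Write $q$ via \eqref{eq:intRep_Nvar} with parameters $(a,\vec b,\mu)$ and form $q(\vec z)-\bar{q(\vec w)}$.
\begin{itemize}
\item[(i)] Since $a$ is real, it cancels.
\item[(ii)] Since $\vec b$ is real, the linear part gives $\sum_\ell b_\ell(z_\ell-\bar{w_\ell})$.
\item[(iii)] The kernel part becomes the integral of $K_n(\vec z,\vec t)-\bar{K_n(\vec w,\vec t)}$.
\end{itemize}
Set $P(\vec z,\vec t)=\prod_j(N_{-1}(z_j,t_j)+N_0(\I,t_j))$ and $Q(\vec t)=\prod_j N_0(\I,t_j)$, which is real. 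Then
$$K_n(\vec z,\vec t)-\bar{K_n(\vec w,\vec t)} = 2\I\big(P(\vec z,\vec t)+\bar{P(\vec w,\vec t)}-Q(\vec t)\big),$$
using $\bar{N_{-1}(w_j,t_j)}=N_1(w_j,t_j)$. For the target, write
$$\prod_j\frac{z_j-\bar{w_j}}{(t_j-z_j)(t_j-\bar{w_j})}=\prod_j\Big(\frac{1}{t_j-z_j}-\frac{1}{t_j-\bar{w_j}}\Big)=(2\I)^n\prod_j\big(N_{-1}(z_j,t_j)+N_0(\I,t_j)+N_1(w_j,t_j)\big).$$
Expanding over $\vec\rho\in\{-1,0,1\}^n$ splits this into three groups.
\begin{itemize}
\item Terms with $\vec\rho\in\{-1,0\}^n$ sum to $P(\vec z,\vec t)$.
\item Terms with $\vec\rho\in\{0,1\}^n$ sum to $\bar{P(\vec w,\vec t)}$.
\item The all-zero term is counted in both groups above, so one copy of $Q$ must be subtracted.
\item The remaining terms are the mixed ones, containing both a $-1$ and a $1$.
\end{itemize}
Therefore
$$K_n(\vec z,\vec t)-\bar{K_n(\vec w,\vec t)} = \frac{1}{(2\I)^{n-1}}\prod_j\frac{z_j-\bar{w_j}}{(t_j-z_j)(t_j-\bar{w_j})} - 2\I\sum_{\text{mixed }\vec\rho}\prod_j N_{\rho_j},$$
where $N_{-1}$ is evaluated at $z_j$ and $N_1$ at $w_j$. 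Integrating against $\mu/\pi^n$ produces exactly the Poisson-type term given by $\mu$, provided the mixed sum integrates to zero. Integrability of each piece follows from the growth condition \eqref{eq:measure_growth}, since every factor is $O((1+t_j^2)^{-1/2})$ and each product of two such factors is $O((1+t_j^2)^{-1})$ for fixed $\vec z,\vec w$.

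\emph{Main obstacle.} The key step is that the Nevanlinna condition \eqref{eq:measure_Nevan} is only stated on the diagonal, i.e.\ with $N_{-1}$ and $N_1$ evaluated at the same point $\vec z$. We need it in the two-point form. To get this, I would consider
$$G(\vec z,\vec\zeta):=\sum_{\text{mixed}}\int\prod_j N_{\rho_j}\,\diff\mu,$$
where $N_1(w_j,t_j)$ is written as $\frac{1}{2\I}\big(\frac{1}{t_j+\I}-\frac{1}{t_j-\zeta_j}\big)$ with $\zeta_j=\bar{w_j}$.
\begin{itemize}
\item The function $G$ is holomorphic on $\C^{+n}\times\C^{-n}$; differentiation under the integral is justified by the same domination as above.
\item It vanishes on the totally real set $\{\vec\zeta=\bar{\vec z}\}$.
\item A holomorphic function vanishing on a maximal totally real submanifold of a connected domain vanishes identically. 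Concretely, the Taylor coefficients in $(\vec z,\vec\zeta)$ are determined by the Wirtinger derivatives along $\vec\zeta=\bar{\vec z}$.
\end{itemize}
Hence the mixed terms integrate to zero for all $\vec z,\vec w$, giving \eqref{eq:positive_semi_decomposition} with $\vec d=\vec b$ and $D$ given by $\mu$.
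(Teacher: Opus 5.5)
The paper gives no proof of this theorem. It is quoted from \cite[Thm.~4.1]{Nedic2021b} and used as a black box in Cases 2 and 3 of Proposition~\ref{prop:hn_to_op}, so there is no internal argument to compare against. Your argument is a self-contained replacement for that citation, derived from Theorem~\ref{thm:intRep_Nvar}, and it is correct.

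\textbf{Sufficiency.} The diagonal evaluation $\vec w=\vec z$ settles this direction.

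\textbf{Necessity.} The algebra all checks out:
\begin{itemize}
\item the identity $K_n(\vec z,\vec t)-\bar{K_n(\vec w,\vec t)}=2\I\big(P(\vec z,\vec t)+\bar{P(\vec w,\vec t)}-Q(\vec t)\big)$;
\item the telescoping $N_{-1}(z_j,t_j)+N_0(\I,t_j)+N_1(w_j,t_j)=\frac{1}{2\I}\big(\frac{1}{t_j-z_j}-\frac{1}{t_j-\bar{w_j}}\big)$;
\item the inclusion--exclusion over $\{-1,0\}^n\cup\{0,1\}^n$.
\end{itemize}
You also correctly isolate the only non-formal step. Condition \eqref{eq:measure_Nevan} is a diagonal statement, while you need the mixed sum to vanish with $N_{-1}$ evaluated at $\vec z$ and $N_1$ at $\vec w$. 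Your polarization handles this. The function $G$ is holomorphic on the connected set $\C^{+n}\times\C^{-n}$. At $(\vec z,\bar{\vec z})$, each derivative $\partial_{\vec z}^{\alpha}\partial_{\vec\zeta}^{\beta}G$ equals the Wirtinger derivative $\partial_{\vec z}^{\alpha}\partial_{\bar{\vec z}}^{\beta}$ of $G(\vec z,\bar{\vec z})\equiv 0$. Hence the Taylor series of $G$ vanishes there, and $G\equiv 0$ by the identity theorem.

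\textbf{One wording fix.} In $\prod_j N_{\rho_j}$ each coordinate contributes a single factor, so "product of two such factors" only makes sense inside one $N_{\rho_j}$. What you need, and what is true, is that each factor is $O((1+t_j^2)^{-1})$ once the fractions are combined:
\begin{itemize}
\item $N_{-1}(z,t)=\frac{z-\I}{2\I(t-z)(t-\I)}$;
\item $N_0(\I,t)=\frac{1}{1+t^2}$;
\item $N_1(w,t)=-\frac{\bar{w}+\I}{2\I(t+\I)(t-\bar{w})}$.
\end{itemize}
The constants are locally uniform in $(\vec z,\vec\zeta)$. This gives integrability against $\mu$ via \eqref{eq:measure_growth}, and also the domination needed to differentiate $G$ under the integral.

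\textbf{What your route adds.} It makes visible something the bare citation hides. The defect between the Poisson-type term and $\frac{1}{\pi^n}\int_{\R^n}\big(K_n(\vec z,\vec t)-\bar{K_n(\vec w,\vec t)}\big)\diff\mu(\vec t)$ is exactly the polarized mixed sum. This is the mechanism behind the paper's remark after Proposition~\ref{prop:op_to_hn}, that a Poisson-type term yields a difference $q(\vec z)-\bar{q(\vec w)}$ precisely for Nevanlinna measures.
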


We note that \cite[Thm. 4.1]{Nedic2021b} is formulated more generally than Theorem \ref{thm:positive_semi_decomposition}. However, this weaker statement presented here is sufficient for the purposes of this paper.

Finally, let us review an elementary but important example of a Poisson-type function.

\begin{example}
\label{ex:Poisson_type_Lebsgue}
Let $\lambda_{\R^n}$ denote the standard Lebesgue measure of $\R^n$. To determine the Poisson-type function $F\colon \C^{+n} \times \C^{+n} \to \C$ given by the Lebesgue measure we note first that by standard residue calculus, it holds for any $z,w \in \C^+$ that
$$\frac{1}{\pi}\int_\R\frac{1}{(t-z)(t-\bar{w})}\diff t = \frac{2\,\I}{z-\bar{w}}.$$
A repeated application of this calculation yields
$$F(\vec{z},\vec{w}) = \frac{1}{\pi^n}\int_{\R^n}\prod_{j=1}^n\frac{1}{(t_j-z_j)(t_j-\bar{w_j})}\diff\lambda_{\R^n}(\vec{t}) = \prod_{j=1}^n\frac{2\,\I}{z_j - \bar{w_j}}.$$
It is also important to note that, for example, the function $\til{F}\colon\C^{+n} \times \C^{+n} \to \C$ defined as
$$\til{F}(\vec{z},\vec{w}) := \prod_{j=1}^{n-1}\frac{2\,\I}{z_j - \bar{w_j}}$$
is not a Poisson-type function. If it were, one can show as in \cite[Ex. 4.12]{Nedic2021b} that, in that case, it would be given by the zero-measure, an impossibility. Nevertheless, the function $\til{F}$ is still positive semi-definite on $\C^{+n} \times \C^{+n}$. \hfill $\lozenge$ 
\end{example}

\section{Operator representation theorems}
\label{sec:opRep}

Before we state our main result, let us outline the main idea of how the construction of a direct analogue to representation \eqref{eq:opRep_1var} can be carried out in several variables. For the moment, suppose we restrict our attention to functions $q$ which have $\vec{b} = \vec{0}$, so that representation \eqref{eq:intRep_Nvar} takes the form
\begin{equation}
\label{e.integral}
    q(\vec{z}) = a + \frac{1}{\pi^n}\int_{\R^n} K_n(\vec{z}, \vec{t})\diff\mu(\vec{t}),
\end{equation}
where $a \in \R$ and $\mu$ satisfies conditions \eqref{eq:measure_growth} and \eqref{eq:measure_Nevan}. To express
\eqref{e.integral} in terms of linear relations (or operators in this
case), we note that
\begin{equation*}
  K_n(\vec{z}, \vec{t})-\overline{K_n(\I\,\vec{1}, \vec{t})} =\frac{2\,\I}{(2\,\I)^n}\prod_{k=1}^n\left(\frac{1}{t_k-z_k}-\frac{1}{t_k+\I}\right) = \frac{2\,\I}{(2\,\I)^n}\prod_{k=1}^n\frac{(z_k+\I)(t_k-\I)}{(t_k-z_k)(t_k^2+1)}.
\end{equation*}
Here, $\I\,\vec{1}$ is a shorthand for $(\I,\ldots,\I)$.  Now let $\hilb=\Leb^2(\R^n,\mu)$ with $\langle{}f,g\rangle{}_\hilb=\frac{1}{\pi^n}\int f\bar g\,\diff\mu$, and define $(A_kf)(\vec{t})=t_kf(\vec{t})$ and $v(\vec{t})=\prod_{k=1}^n(t_k-\I)^{-1}$. The representation \eqref{e.integral} then implies that
\begin{equation}
\label{e.myrep}
    q(\vec{z}) = \overline{q(\I\,\vec{1})}+\left\langle\frac{2\,\I}{(2\,\I)^n}\prod_{k=1}^n(z_k+\I)(A_k-\I)(A_k-z_k)^{-1}v,v\right\rangle_\hilb.
\end{equation}
Because $\Id+(z-w)(A-z)^{-1}=(A-w)(A-z)^{-1}$ when $A$ is an operator, \eqref{e.myrep} gives an analogue of \eqref{eq:opRep_1var} in several variables, at least when $\vec{w}=\I\,\vec{1}$. To obtain a result for arbitrary $\vec{w}$, the representation \eqref{eq:intRep_Nvar} must be replaced with that of Theorem \ref{thm:positive_semi_decomposition}. This leads to our main result, which gives the following characterization of Herglotz-Nevanlinna functions in several variables.

\begin{thm}
\label{thm:opRep_Nvar}
A function $q\colon\C^{+n} \to \C$ is a Herglotz-Nevanlinna function if and only if there exists a complex Hilbert space $(\hilb,\langle\cdot,\cdot\rangle_\hilb)$ and $n$ self-adjoint linear relations $A_1,\ldots,A_n$ in $\hilb$, such that to every   $\vec w \in \C^{+n}$ corresponds an element $v_{\vec w}\in\hilb$ for which
\begin{multline}\label{eq:opRep_Nvar}
q(\vec{z}) = \bar{q(\vec{w})} \\ + \frac{1}{(2\,\I)^{n-1}}\prod_{k=1}^n(z_k-\bar{w_k}) \cdot \bigg\langle\prod_{k=1}^n(\Id + (z_k - w_k)(A_k - z_k)^{-1})\,v_{\vec{w}},v_{\vec{w}}\bigg\rangle_\hilb
\end{multline}
for all $\vec z\in\C^{+n}$, where $\Id$ denotes the identity operator on $\hilb$.
\end{thm}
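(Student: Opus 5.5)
The plan is to prove both implications directly. The forward direction will be deduced from Theorem \ref{thm:positive_semi_decomposition} by an explicit model: multiplication operators on a weighted $\Leb^2$ space, together with purely multi-valued relations that absorb the linear terms. The converse is a short computation.

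\emph{($\Leftarrow$)} Suppose representation \eqref{eq:opRep_Nvar} holds, and fix one $\vec{w} \in \C^{+n}$.
\begin{itemize}
\item[(i)] \emph{Holomorphy.} By Lemma \ref{lem:self_adjoint}, each $(A_k-z_k)^{-1}$ is a bounded everywhere-defined operator for $z_k \in \C^+$. By Lemma \ref{lem:resolvent_identity}, it depends holomorphically on $z_k$. The $k$-th factor depends on $z_k$ only, so the product $\prod_k(\Id+(z_k-w_k)(A_k-z_k)^{-1})$ is separately holomorphic in each variable and locally bounded, hence jointly holomorphic. Therefore the right-hand side of \eqref{eq:opRep_Nvar}, and hence $q$, is holomorphic on $\C^{+n}$.
\item[(ii)] \emph{Sign of the imaginary part.} Take $\vec z=\vec w$. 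Every factor of the operator product reduces to $\Id$, so
$$q(\vec w)-\bar{q(\vec w)}=(2\I)^{1-n}\prod_k(2\I\,\Im[w_k])\,\|v_{\vec w}\|_\hilb^2 .$$
This gives $\Im[q(\vec w)]=\prod_k\Im[w_k]\,\|v_{\vec w}\|_\hilb^2\ge 0$.
\item[(iii)] Since $\vec w$ was arbitrary, $\Im[q] \geq 0$ on all of $\C^{+n}$, and $q$ is Herglotz-Nevanlinna.
\end{itemize}

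\emph{($\Rightarrow$)} Apply Theorem \ref{thm:positive_semi_decomposition} with $\vec d=\vec b$ and $D$ given by the measure $\mu$. The product term $(2\I)^{1-n}\prod_j(z_j-\bar{w_j})D(\vec z,\vec w)$ is handled exactly as in the motivating computation \eqref{e.myrep}.
\begin{itemize}
\item[(i)] \emph{The measure part.} Let $\hilb_0=\Leb^2(\R^n,\mu)$ with inner product $\pi^{-n}\int f\bar g\,\diff\mu$. Let $A_k$ act on $\hilb_0$ as multiplication by $t_k$, and set $v^0_{\vec w}(\vec t)=\prod_k(t_k-w_k)^{-1}$.
\item[(ii)] $v^0_{\vec w}\in\hilb_0$ by the growth condition \eqref{eq:measure_growth}, since $|t-w|^{-2}\le C_w(1+t^2)^{-1}$.
\item[(iii)] On $\hilb_0$ we have $\Id+(z_k-w_k)(A_k-z_k)^{-1}=$ multiplication by $(t_k-w_k)/(t_k-z_k)$. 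Hence the inner product in \eqref{eq:opRep_Nvar} equals $\pi^{-n}\int\prod_k (t_k-z_k)^{-1}(t_k-\bar{w_k})^{-1}\diff\mu=D(\vec z,\vec w)$.
\end{itemize}

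\emph{The linear terms.} For each $j$ with $b_j>0$, the term $b_j(z_j-\bar{w_j})$ is not of product form. The idea is to rewrite it, using Example \ref{ex:Poisson_type_Lebsgue} in $n-1$ variables, as
$$(2\I)^{1-n}\prod_{k=1}^n(z_k-\bar{w_k})\cdot b_j\prod_{k\ne j}\frac{2\I}{z_k-\bar{w_k}} .$$
This realizes it on a second kind of summand.
\begin{itemize}
\item[(i)] \emph{The space.} Add a summand $\hilb_j=\Leb^2(\R^{n-1},\lambda)$ in the variables $(t_k)_{k\ne j}$, with inner product $b_j\pi^{1-n}\int f\bar g\,\diff\lambda$.
\item[(ii)] \emph{The relations.} On $\hilb_j$, let $A_k$ be multiplication by $t_k$ for $k\ne j$. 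Let $A_j$ restricted to $\hilb_j$ be the purely multi-valued relation $\{0\}\times\hilb_j$. This relation is self-adjoint, and its resolvent is the zero operator, so its factor in \eqref{eq:opRep_Nvar} is $\Id$. This mirrors the one-variable construction with $\hilb = \Leb^2(\R,\mu)\oplus\C$ in Section \ref{sec:one_variable_case}.
\item[(iii)] \emph{The vector.} Set $v^j_{\vec w}=\prod_{k\ne j}(t_k-w_k)^{-1}\in\hilb_j$.
\item[(iv)] \emph{Assembling.} Put $\hilb=\hilb_0\oplus\bigoplus_j\hilb_j$, let each $A_k$ be the orthogonal sum of its pieces, and set $v_{\vec w}=v^0_{\vec w}\oplus\bigoplus_j v^j_{\vec w}$. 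The inner product in \eqref{eq:opRep_Nvar} then splits over the summands, and the identity follows from Theorem \ref{thm:positive_semi_decomposition}.
\end{itemize}

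\emph{Main obstacle.} I expect the main work to lie in the bookkeeping of this last part, specifically two points. First, verifying directly from Definition \ref{def:adjoint_relation} that the orthogonal sum of self-adjoint relations (multiplication operators and the purely multi-valued relations $\{0\}\times\hilb_j$) is again self-adjoint. Second, checking that resolvents act componentwise, so that the factor for $A_j$ really reduces to $\Id$ on $\hilb_j$.
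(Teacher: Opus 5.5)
Your proposal is correct and follows the paper's proof essentially step for step. The converse is exactly Proposition \ref{prop:op_to_hn}: holomorphy via Lemmas \ref{lem:resolvent_identity} and \ref{lem:self_adjoint}, and non-negativity of the imaginary part by setting $\vec z=\vec w$. Your forward construction is precisely Cases 2--4 in the proof of Proposition \ref{prop:hn_to_op}: $\Leb^2(\R^n,\mu)$ with multiplication operators, plus, for each $j$ with $b_j>0$, a summand $\Leb^2(\R^{n-1})$ weighted by $b_j\pi^{1-n}$ on which $A_j$ is purely multi-valued. The only difference is that you assemble all summands at once rather than splitting into the cases of Table \ref{tab:cases}.
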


Before proceeding with the proof, we first list a few remarks regarding the formulation of the theorem and the objects that appear in it.

\begin{remarks}
  (\emph{a}) Given a linear relation $A$ on $\hilb$ and
  $z,w \in \C^+$, the expression $\Id + (z-w)(A-z)^{-1}$ is a special
  case of a \emph{Möbius transform} of $A$
  \cite[pg. 20]{BehrndtHassiDeSnoo2020}.

  (\emph{b}) On a notational level, it is understood that
  \begin{multline*}
    \prod_{k=1}^n(\Id + (z_k - w_k)(A_k - z_k)^{-1})\,v_{\vec{w}} \\
    = (\Id + (z_1 - w_1)(A_1 - z_1)^{-1})\ldots(\Id + (z_n - w_n)(A_n - z_n)^{-1})v_{\vec{w}}.
  \end{multline*}

  (\emph{c}) Theorem \ref{thm:opRep_Nvar} makes no claims on the
  uniqueness of representation \eqref{eq:opRep_Nvar}, nor does it
  include any kind of characterization of possible spaces $\hilb$,
  relations $A_k$ etc. The proof, however, is constructive and gives
  an example of a possible Hilbert space and corresponding
  self-adjoint relations.
\end{remarks}

The proof of Theorem \ref{thm:opRep_Nvar} is split among Proposition \ref{prop:hn_to_op} and \ref{prop:op_to_hn} as they are both interesting in their own right, and provide an opportunity to contrast Theorem \ref{thm:opRep_Nvar} against both Theorem \ref{thm:all_rep_1var}.c and Theorem \ref{thm:intRep_Nvar}. 

\subsection{Constructing the linear relations from a given function}

First, we show how we can give an explicit construction of a Hilbert space and accompanying self-adjoint relations given a Herglotz-Nevanlinna function.

\begin{prop}\label{prop:hn_to_op}
Let $q\colon\C^{+n} \to \C$ be a Herglotz-Nevanlinna function. Then, there exists a complex Hilbert space $(\hilb,\langle\cdot,\cdot\rangle_\hilb)$ and $n$ self-adjoint linear relations $A_1,\ldots,A_n$ in $\hilb$ such that \eqref{eq:opRep_Nvar} holds for some $\vec{w} \mapsto v_{\vec{w}}$. Moreover, one may arrange that
\begin{equation}\label{eq:admissible-v_w}
    v_{\vec{z}} = \prod_{k=1}^n(\Id + (z_k - w_k)(A_k - z_k)^{-1})\,v_{\vec{w}}
\end{equation}
for all $\vec{z}, \vec{w} \in \C^{+n}$.
\end{prop}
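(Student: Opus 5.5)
The plan is to start from Theorem \ref{thm:positive_semi_decomposition} with $\vec{d} = \vec{b}$ and $D$ the Poisson-type function given by the representing measure $\mu$ of Theorem \ref{thm:intRep_Nvar}. This gives
$$q(\vec z)-\bar{q(\vec w)} = \sum_{j=1}^n b_j(z_j-\bar{w_j}) + \frac{1}{(2\,\I)^{n-1}}\prod_{j=1}^n(z_j-\bar{w_j})\,D(\vec z,\vec w).$$
The aim is to rewrite the whole right-hand side as $\frac{1}{(2\,\I)^{n-1}}\prod_j(z_j-\bar{w_j})\langle v_{\vec z},v_{\vec w}\rangle_\hilb$, where $v_{\vec z}$ is a family of vectors on which the Möbius transforms act as in \eqref{eq:admissible-v_w}. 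Using Example \ref{ex:Poisson_type_Lebsgue}, each linear term can be written as
$$b_j(z_j-\bar{w_j}) = \frac{1}{(2\,\I)^{n-1}}\prod_k(z_k-\bar{w_k})\cdot b_j\prod_{k\neq j}\frac{2\,\I}{z_k-\bar{w_k}}.$$
The factor $\prod_{k\neq j}\frac{2\I}{z_k-\bar{w_k}}$ is the Poisson-type kernel of Lebesgue measure in the $n-1$ variables other than $z_j$.

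For the Hilbert space, I will take
$$\hilb = \Leb^2(\R^n,\mu)\oplus\bigoplus_{j=1}^n \Leb^2(\R^{n-1},\lambda_{\R^{n-1}}).$$
The first summand carries the inner product $\pi^{-n}\int f\bar g\,\diff\mu$, and the $j$-th extra summand carries $b_j\pi^{-(n-1)}\int f\bar g\,\diff\lambda$, where the variable $t_j$ is omitted. If $b_j=0$, that summand is simply dropped. The vectors are defined componentwise:
$$v_{\vec z} = \Big(\vec t\mapsto\prod_k (t_k-z_k)^{-1},\ \Big(\hat{\vec t}_j\mapsto \prod_{k\neq j}(t_k-z_k)^{-1}\Big)_{j}\Big).$$
These lie in $\hilb$ because $\mu$ satisfies the growth condition \eqref{eq:measure_growth}. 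The preceding computations then give $\langle v_{\vec z},v_{\vec w}\rangle_\hilb = D(\vec z,\vec w)+\sum_j b_j\prod_{k\neq j}\frac{2\I}{z_k-\bar{w_k}}$, and hence
$$q(\vec z)=\bar{q(\vec w)}+\frac{1}{(2\,\I)^{n-1}}\prod_k(z_k-\bar{w_k})\langle v_{\vec z},v_{\vec w}\rangle_\hilb.$$

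Next I define the relation $A_k$ on each summand. On $\Leb^2(\R^n,\mu)$ it is multiplication by $t_k$. On the $j$-th extra summand with $j\neq k$ it is also multiplication by $t_k$, since that variable is present there. On the $k$-th extra summand it is the purely multivalued relation $\{0\}\times \Leb^2(\R^{n-1})$, which is the analogue of the $\xi=0$ relation in the one-variable construction of Section \ref{sec:one_variable_case}. Each piece is self-adjoint: multiplication by a real function is a self-adjoint operator, and $\{0\}\times\mathcal K$ is self-adjoint. An orthogonal sum of self-adjoint relations is again self-adjoint, so each $A_k$ is self-adjoint.

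The resolvents are computed summand by summand. On a multiplication summand, $(A_k-z_k)^{-1}$ is multiplication by $(t_k-z_k)^{-1}$, so the factor $\Id+(z_k-w_k)(A_k-z_k)^{-1}$ is multiplication by $\frac{t_k-w_k}{t_k-z_k}$. This sends $(t_k-w_k)^{-1}$ to $(t_k-z_k)^{-1}$. On the $k$-th extra summand, $(A_k-z_k)^{-1}=0$, so the factor acts as the identity, which is correct because $v$ has no $t_k$-dependence there. Since all $A_k$ act diagonally as multiplications or are trivial on a given summand, the product of the factors acts factorwise, and \eqref{eq:admissible-v_w} follows. Substituting \eqref{eq:admissible-v_w} into the identity displayed above yields \eqref{eq:opRep_Nvar}.

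I expect the main obstacle to be the careful bookkeeping of the extra summands. In particular, the case $n=1$ needs checking: there the $j$-th summand is $\Leb^2(\R^0)=\C$ with weight $b$, which must reproduce Section \ref{sec:one_variable_case}. I also need to confirm that the product of Möbius transforms is well defined when the $A_k$ are relations; this holds because each resolvent is an everywhere-defined bounded operator by Lemma \ref{lem:self_adjoint}.
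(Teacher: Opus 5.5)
Your proposal is correct and is essentially the paper's proof. The paper splits into four cases according to whether $\vec{b}$ and $\mu$ vanish, and treats the general case as the direct sum of the $\Leb^2(\R^n,\mu)$ multiplication construction and the $\bigoplus_j \Leb^2(R_j,b_j\lambda)$ construction in which $A_k$ is purely multivalued on the $k$-th component; this is exactly your single unified space, with the same $A_k$ and $v_{\vec w}$. Computing $\langle v_{\vec z},v_{\vec w}\rangle_\hilb$ first and then verifying \eqref{eq:admissible-v_w}, rather than evaluating the Möbius product inside the inner product directly, is only a cosmetic reordering.
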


\proof
Let $q$ be a Herglotz-Nevanlinna function. By Theorem \ref{thm:intRep_Nvar}, it has a representation, for $\vec{z} \in \C^{+n}$, of the form
$$q(\vec{z}) = a + \sum_{\ell=1}^nb_\ell z_\ell + \frac{1}{\pi^n}\int_{\R^n}K_n(\vec{z},\vec{t})\diff\mu(\vec{t}),$$
where $K_n$ is specified in formula \eqref{eq:kernel_Kn}, $a \in \R$, $\vec{b} \in [0,\infty)^n$ and $\mu$ is a positive Borel measure on $\R^n$ satisfying conditions \eqref{eq:measure_growth} and \eqref{eq:measure_Nevan}. The proof is now be conducted by explicitly specifying $\hilb, A_k$ and $v_{\vec{w}}$, and showing that our choice is valid. To that end, we are going to fragment the proof into smaller cases depending on which of the parameters $\vec{b}$ and $\mu$ are zero. The structure of these cases is summarized in Table \ref{tab:cases} below. Simultaneously, note that the parameter $a$ has no effect, regardless of whether it is zero or not, on the right-hand side of formula \eqref{eq:opRep_Nvar}.

\begin{table}[!ht]
    \centering
    \begin{tabular}{c|cc}
Case & $\vec{b} = \vec{0}$? & $\mu = 0$? \\
\hline
1 & \checkmark & \checkmark \\ 
2 & \checkmark & $\times$ \\ 
3 & $\times$ & \checkmark \\ 
4 & $\times$ & $\times$
    \end{tabular}
    \caption{A summary of the different cases of Part 1 of the proof of Theorem \ref{thm:opRep_Nvar}.}
    \label{tab:cases}
\end{table}

\textit{Case 1:} $\vec{b} = \vec{0}, \mu = 0$. In this case, it holds that $q(\vec{z}) - \bar{q(\vec{w})} = 0$ for all $\vec{z},\vec{w} \in \C^{+n}$. Hence, the right-hand side of formula \eqref{eq:opRep_Nvar} must also produce the value zero. This is easiest achieved by choosing $(\hilb,\langle\cdot,\cdot\rangle_\hilb)$ as the trivial vector space over $\C$, \ie $\hilb := \{0\}$ with $\langle 0,0 \rangle_\hilb := 0$. The relations $A_k$ may then be chosen as $A_k := \Id$ for $k \in \{1,\ldots,n\}$ and the element $v_{\vec{w}}$ as $v_{\vec{w}} := 0$. This concludes Case 1.

\textit{Case 2:} $\vec{b} = \vec{0}, \mu \neq 0$. In this case, we choose the space $\hilb := \Leb^2(\R^n,\mu)$ as a vector space over $\C$, where $\mu$ is the representing measure of the function $q$. Define now an inner product on $\hilb$ by setting, for $f,g \in \hilb$,
\begin{equation}
    \label{eq:case12_inner_product}
    \langle f,g \rangle_\hilb := \frac{1}{\pi^n}\int_{\R^n} f(\vec{t})\bar{g(\vec{t})}\diff\mu(\vec{t}).
\end{equation}
Clearly, $(\hilb,\langle\cdot,\cdot\rangle_\hilb)$ is a Hilbert
space. For each $k = 1,\ldots,n$ define now the linear relation $A_k$
(in this case an operator) via the equivalence
$$\{f,g\} \in A_k \iff t_k\,f(\vec{t}) = g(\vec{t}) \quad\text{a.e. on}\;\R^n.$$

An elementary computation now yields that each relation $A_k$ is self-adjoint. Moreover, by Lemma \ref{lem:self_adjoint}, it holds for each $z \in \C^+$ that the resolvent relation $(A_k-z)^{-1}$ is an everywhere-defined bounded linear operator. Explicitly,
$$\big((A_k-z)^{-1}f\big)(\vec{t}) = (t_k-z)^{-1}f(\vec{t}).$$

Let now $\vec{z},\vec{w} \in \C^{+n}$ be arbitrary and let them be fixed for the remainder of Case 2. Define a function $v_{\vec{w}}\colon\R^n \to \C$ by setting $v_{\vec{w}}(\vec{t}) := \prod_{j=1}^n(t_j-w_j)^{-1}$. Note that $v_{\vec{w}} \in \hilb$ since
\begin{multline}
\label{eq:checking_L2}
    \int_{\R^n}|v_{\vec{w}}(\vec{t})|^2\diff\mu(\vec{t}) \leq \bigg\|\vec{t} \mapsto \prod_{j=1}^n\frac{1+t_j^2}{|t_j-w_j|^2}\bigg\|_\infty\cdot\int_{\R^n}\prod_{j=1}^n\frac{1}{1+t_j^2}\diff\mu(\vec{t}) < \infty
\end{multline}
as the measure $\mu$ satisfies the growth condition \eqref{eq:measure_growth}. Here, $\|\cdot\|_\infty$ denotes the standard supremum norm for functions defined on $\R^n$. Next, after all of the above choices of $\hilb,A_k,v_{\vec{w}}$ have been made, the expression
\begin{equation}
    \label{eq:mobious_expression}
    \prod_{k=1}^n(\Id + (z_k - w_k)(A_k - z_k)^{-1})\,v_{\vec{w}}
\end{equation}
is well-defined since all of the different resolvent relations $(A_k - z_k)^{-1}$ are everywhere-defined bounded linear operators.

Continue by considering the expression
$$\frac{1}{(2\,\I)^{n-1}}\prod_{k=1}^n(z_k-\bar{w_k})\cdot \bigg\langle\prod_{k=1}^n(\Id + (z_k - w_k)(A_k - z_k)^{-1})\,v_{\vec{w}},v_{\vec{w}}\bigg\rangle_\hilb = (*1).$$
To that end, note first that
\begin{equation}
    \label{eq:action_1}
    (\Id + (z_k - w_k)(A_k - z_k)^{-1})v_{\vec{w}}(\vec{t}) = \left(1 + \frac{z_k - w_k}{t_k - z_k}\right)v_{\vec{w}}(\vec{t}).
\end{equation}
Making repeated use of equality \eqref{eq:action_1}, we calculate further that
$$\begin{array}{RCL}
    (*1) & = & \frac{1}{(2\,\I)^{n-1}}\prod_{k=1}^n(z_k - \bar{w_k}) \\[0.45cm]
    ~ & ~ & \cdot\,\frac{1}{\pi^n}\int_{\R^n}\prod_{k=1}^n\left(1+\frac{z_k - w_k}{t_k-z_k}\right)\prod_{k=1}^n\frac{1}{t_k-w_k}\prod_{k=1}^n\frac{1}{t_k-\bar{w_k}}\diff\mu(\vec{t}) \\[0.45cm]
    ~ & = & \frac{1}{(2\,\I)^{n-1}}\prod_{k=1}^n(z_k - \bar{w_k})\cdot\frac{1}{\pi^n}\int_{\R^n}\prod_{k=1}^n\frac{1}{(t_k-z_k)(t_k-\bar{w_k})}\diff\mu(\vec{t}) \\[0.45cm]
    ~ & = & \frac{1}{(2\,\I)^{n-1}}\prod_{k=1}^n(z_k - \bar{w_k})\,D(\vec{z},\vec{w}) = (**1),
\end{array}$$
where $D$ is the Poisson-type function given by the measure $\mu$. As we are in the case where $\vec{b} = \vec{0}$, Theorem \ref{thm:positive_semi_decomposition} now implies that
$$(**1) = q(\vec{z}) - \bar{q(\vec{w})},$$
thereby leading to equality \eqref{eq:opRep_Nvar}.  Moreover, we see
that \eqref{eq:admissible-v_w} holds.  This concludes Case 2.

\textit{Case 3:} $\vec{b} \neq \vec{0}, \mu = 0$. 
To begin, given $j \in \{1,\ldots,n\}$, define the space $R_j$ by setting $R_j := \vec{e}_j^\perp \cong \R^{n-1}$ as the orthogonal complement to the standard basis vector $\vec{e}_j$. Equip this space with the coordinates $\vec{s} := (s_1,\ldots,s_{j-1},s_{j+1},\ldots,s_n)$ and the measure
$$\diff\lambda_{R_j}(\vec{s}) := b_j\,\diff s_1 \,\ldots\, \diff s_{j-1}\,\diff s_{j+1} \,\ldots\, \diff s_n,$$
choosing the space $\hilb$ as
$$\hilb := \bigoplus_{j = 1}^n\Leb^2(R_j,\lambda_{R_j}).$$
Note that if $j \in \{1,\ldots,n\}$ is such that $b_j = 0$, then $\Leb^2(R_j,\lambda_{R_j}) \cong \{0\}$, \ie all functions lie in the equivalence class of the constant-zero function, while if $b_j \neq 0$, then $\Leb^2(R_j,\lambda_{R_j}) \cong \Leb^2(\R^{n-1},\lambda_{\R^{n-1}})$, where, as before, $\lambda_{\R^{n-1}}$ denotes the standard Lebesgue measure on $\R^{n-1}$. 

Set now the notation that an element $\vec{\xi} \in \hilb$ is written as a vector of functions, \ie
$$\vec{\xi} = (\xi_1,\xi_2,\ldots,\xi_n) \quad\text{where}\quad \xi_j \in \Leb^2(R_j,\lambda_{R_J}).$$
We define an inner product on $\hilb$ by setting, for $\vec{\xi},\vec{\eta} \in \hilb$, as
$$\langle\vec{\xi},\vec{\eta}\rangle_\hilb := \sum_{j=1}^n\frac{1}{\pi^{n-1}}\int_{R_j}\xi_j(\vec{s})\bar{\eta_j(\vec{s})}\diff\lambda_{R_j}(\vec{s}).$$
With this choice, $(\hilb,\langle\cdot,\cdot\rangle_\hilb)$ is a Hilbert space.

Next, given $k \in \{1,\ldots,n\}$, define the linear relation $A_k$ via the equivalence
$$\{\vec{\xi},\vec{\eta}\} \in A_k \iff \xi_k = 0\wedge \forall j\in\{1,\ldots,n\}\setminus\{k\}:\,s_k\,\xi_j(\vec{s}) = \eta_j(\vec{s}) \quad\text{a.e. on}\;R_j.$$
Regarding the multi-valued part of $A_k$, we note first that
$$\{\vec{0},\vec{\eta}\} \in A_k \iff \forall j\in\{1,\ldots,n\}\setminus\{k\}:\; \eta_j(\vec{s}) = 0 \quad\text{a.e. on}\;R_j.$$
Hence,
$$\Mult(A_k) = \big\{(0,\ldots,0,\eta,0,\ldots,0) \in \hilb~\big|~\eta \in \Leb^2(R_k,\lambda_{R_k})\big\},$$
where the function $\eta$ lies at the $k$-th position. Therefore, $\Mult(A_k)$ equals $\{\vec{0}\} \subseteq \hilb$ when $b_k = 0$ and is isomorphic (as a Hilbert space) to $\Leb^2(\R^{n-1},\lambda_{\R^{n-1}})$ when $b_k \neq 0$.

Continuing, we need to check that each relation $A_k$ is self-adjoint. To that end, we have the following chain of equivalences:
$$\begin{array}{RCL}
\multicolumn{3}{L}{\{\vec{\xi},\vec{\eta}\} \in A_k^* \iff \forall \{\vec{f},\vec{g}\} \in A_k:\:\langle \vec{g},\vec{\xi} \rangle_\hilb = \langle \vec{f},\vec{\eta} \rangle_\hilb } \\[0.35cm] 
~ & \iff & \forall \{\vec{f},\vec{g}\} \in A_k: \\[0.1cm]
~ & ~ & \sum_{j=1}^n\frac{1}{\pi^{n-1}}\int_{R_j}g_j(\vec{s})\bar{\xi_j(\vec{s})}\diff\lambda_{R_j}(\vec{s}) = \sum_{j=1}^n\frac{1}{\pi^{n-1}}\int_{R_j}f_j(\vec{s})\bar{\eta_j(\vec{s})}\diff\lambda_{R_j}(\vec{s}). \\[0.55cm] 
~ & \iff & \forall \{\vec{f},\vec{g}\} \in A_k: \\[0.1cm]
~ & ~ & \sum_{\substack{j = 1 \\ j \neq k}}^n\int_{R_j}f_j(\vec{s})\bar{\big(s_k\,\xi_j(\vec{s}) - \eta_j(\vec{s})\big)}\diff\lambda_{R_j}(\vec{s}) + \int_{R_k}g_k(\vec{s})\bar{\xi_k(\vec{s})}\diff\lambda_{R_k}(\vec{s}) = 0. \\[0.75cm] 
~ & \iff & \{\vec{\xi},\vec{\eta}\} \in A_k.
\end{array}$$
Here, the claim that
$$\int_{R_j}f_j(\vec{s})\bar{\big(s_k\,\xi_j(\vec{s}) -
  \eta_j(\vec{s})\big)}\diff\lambda_{R_j}(\vec{s}) = 0 \Rightarrow
s_k\,\xi_j(\vec{s}) = \eta_j(\vec{s}) \quad\text{a.e. on}\;R_j$$
hinges on the fact that the projection of $\Dom(A_k)$ onto
$\Leb^2(R_j,\lambda_{R_j})$ is dense for $j\ne k$.  Indeed, it
certainly contains every compactly supported continuous function on
$R_j$.  This proves that the relation $A_{k}$ is self-adjoint.

Next, we determine the resolvent of $A_k$. To that end, we have, for any $\vec{z} \in \C^+$, the following chain of equivalences:
\begin{multline*}
    \{\vec{\xi},\vec{\eta}\} \in (A_k - z)^{-1} \iff \{\vec{\eta},\vec{\xi}\} \in (A_k - z) \iff \{\vec{\eta},\vec{\xi}+z\,\vec{\eta}\} \in A_k \\
    \iff \eta_k = 0 \:\wedge\:\forall j\in\{1,\ldots,n\}\setminus\{k\}:(s_k-z)\,\eta_j(\vec{s}) = \xi_j(\vec{s}) \quad\text{a.e. on}\;R_j.
\end{multline*}
As in Case 2, the resolvent relations $(A_k-z)^{-1}$ are everywhere-defined bounded linear operators. Explicitly,
for $\vec{\xi} \in \hilb$ it holds, for $j \in \{1,\ldots,n\}$, that
$$\big((A_k-z)^{-1}\vec{\xi}\big)_j(\vec{s}) = \left\{\begin{array}{RCL}
     0 & ; & j = k, \\
     (s_k - z)^{-1}\xi_j(\vec{s}) & ; & j \neq k.
\end{array}\right.$$

As in the previous case, let now $\vec{z},\vec{w} \in \C^{+n}$ be arbitrary and let them be fixed for the remainder of Case 3. Define $v_{\vec{w}}$ component-wise by setting, for each $j \in \{1,\ldots,n\}$,
$$(v_{\vec{w}})_{j} := \bigg(\vec{s} \mapsto \prod_{\substack{\ell = 1 \\ \ell \neq j}}^n(s_\ell - w_\ell)^{-1}\bigg).$$
As always, we note that the coordinates represented by the vector $\vec{s}$ change as the index $j$ varies and when given an index $\ell$ they equal the coordinates on $R_j$ as specified in the beginning of Case 3. Continuing, an analogous computation as in Case 2 shows that $(v_{\vec{w}})_{j} \in \Leb^2(R_j,\lambda_{R_j})$ for all $j \in \{1,\ldots,n\}$ and, hence, we indeed have $v_{\vec{w}} \in \hilb$.

Next, we are required to verify that expression
\eqref{eq:mobious_expression} is well-defined, this time with the
choices of $\hilb,A_k,v_{\vec{w}}$ as they have been made in Case
3. This readily follows by the same argument as in Case 2. Note,
additionally, for $j \in \{1,\ldots,n\}$, that
\begin{multline}
    \label{eq:action_2}
    \big((\Id + (z_{k} - w_{k})(A_{k} - z_{k})^{-1})v_{\vec{w}}\big)_j(\vec{s}) \\ = \left\{\begin{array}{RCL}
     (v_{\vec{w}})_j(\vec{s}) & ; & j = k, \\
     \left(1 + \frac{z_{k} - w_{k}}{s_{k} - z_{k}}\right)(v_{\vec{w}})_j(\vec{s}) & ; & j \neq k.
\end{array}\right.
\end{multline}

With this, our preparatory work is complete and by making repeated uses of equality \eqref{eq:action_2}, we calculate that
$$\begin{array}{RCL}
\multicolumn{3}{L}{\frac{1}{(2\,\I)^{n-1}}\prod_{k=1}^n(z_k-\bar{w_k})\cdot \bigg\langle\prod_{k=1}^n(\Id + (z_k - w_k)(A_k - z_k)^{-1})\,v_{\vec{w}},v_{\vec{w}}\bigg\rangle_\hilb} \\
    ~ & = & \frac{1}{(2\,\I)^{n-1}}\prod_{k=1}^n(z_k - \bar{w_k}) \\[0.45cm]
    ~ & ~ & \cdot\sum_{j=1}^n\bigg[\frac{1}{\pi^{n-1}}\int_{R_j}\prod_{\substack{\ell=1 \\ \ell \neq j}}^n\left(1+\frac{z_\ell - w_\ell}{s_\ell-z_\ell}\right)\prod_{\substack{\ell=1 \\ \ell \neq j}}^n\frac{1}{s_\ell-w_\ell}\prod_{\substack{\ell=1 \\ \ell \neq j}}^n\frac{1}{s_\ell-\bar{w_\ell}}\diff\lambda_{R_j}(\vec{s})\bigg] \\[0.45cm]
    ~ & = & \frac{1}{(2\,\I)^{n-1}}\prod_{k=1}^n(z_k - \bar{w_k})\\[0.45cm]
    ~ & ~ & \cdot\sum_{j = 1}^n\bigg[\frac{b_j}{\pi^{n-1}}\int_{\R^{n-1}}\prod_{\substack{\ell=1 \\ \ell \neq j}}^n\frac{1}{(s_\ell-z_\ell)(s_\ell-\bar{w_\ell})}\diff s_1 \,\ldots\, \diff s_{j-1}\,\diff s_{j+1} \,\ldots\, \diff s_n\bigg]  \\[0.45cm]
    ~ & = & \frac{1}{(2\,\I)^{n-1}}\prod_{k=1}^n(z_k - \bar{w_k})\cdot\sum_{j = 1}^n\bigg[b_j\prod_{\substack{\ell=1 \\ \ell \neq j}}^n\frac{2\,\I}{z_\ell-\bar{w_\ell}}\bigg] = \sum_{j = 1}^nb_j(z_j - \bar{w_j})  = (**2).
\end{array}$$
For the first equality on the last line, see
Example \ref{ex:Poisson_type_Lebsgue}. Using
Theorem \ref{thm:positive_semi_decomposition} while noting that we are
in the case when $\mu = 0$, we arrive at
$$(**2) = q(\vec{z}) - \bar{q(\vec{w})},$$
thereby leading to equality \eqref{eq:opRep_Nvar}.  Again, we note
that the above computation also establishes \eqref{eq:admissible-v_w}.
This concludes Case 3.

\textit{Case 4:} $\vec{b} \neq \vec{0}, \mu \neq 0$. This case is resolved by taking a direct sum of all the objects (\ie the Hilbert spaces, the relations, the vector $v_{\vec{w}}$) from Cases 2 and 3. The result follows immediately, concluding Case 4.

This concludes the proof, as we have successfully shown that every Herglotz-Nevanlinna function admits a representation of the desired form.
\endproof

\begin{remark}
Ending up in Cases 1 or 2, when all the linear relations $A_1,\ldots,A_n$ can be chosen as usual operators, can be described by the properties of the function $q$. By \cite[Cor. 4.6.(vi)]{LugerNedic2019}, it holds that
$$b_j = \lim\limits_{z_j \ntto \infty}\frac{q(\vec{z})}{z_j}$$
for all $j = 1,\ldots,n$. This gives an analogous dichotomy as that of Theorem \ref{thm:all_rep_1var}.c, 
\end{remark}

Next, we present a description of Poisson-type functions that was implicitly established in Case 2 of the above proof.

\begin{prop}\label{prop:poisson_rep}
  Let $D$ be a Poisson type function, as in Definition
  \ref{def:poisson_type}.  Then, there exists a Hilbert space
  $(\hilb,\langle\cdot,\cdot\rangle_\hilb)$ and $n$ self-adjoint
  linear operators $A_1,\ldots,A_n$ in $\hilb$ such that to every
  $\vec w\in\C^{+n}$ corresponds an element $v_{\vec{w}} \in \hilb$
  for which
$$D(\vec{z},\vec{w}) = \bigg\langle\prod_{k=1}^n(\Id + (z_k - w_k)(A_k
- z_k)^{-1})\,v_{\vec{w}},v_{\vec{w}}\bigg\rangle_\hilb$$
for all $\vec z\in\C^{+n}$.
\end{prop}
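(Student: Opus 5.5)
The plan is to reuse verbatim the construction from Case 2 of the proof of Proposition \ref{prop:hn_to_op}. The only new work is to check that nothing there used the Nevanlinna condition \eqref{eq:measure_Nevan}; only the growth condition \eqref{eq:measure_growth} should be needed, and that is exactly what Definition \ref{def:poisson_type} provides.

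Let $\mu$ be the measure giving $D$. If $\mu = 0$, then $D \equiv 0$, and I take $\hilb = \{0\}$, $A_k = \Id$ and $v_{\vec w} = 0$, exactly as in Case 1. Otherwise I set $\hilb := \Leb^2(\R^n,\mu)$ with the inner product \eqref{eq:case12_inner_product}, and let $A_k$ be multiplication by $t_k$ on its maximal domain.

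Each $A_k$ is a self-adjoint operator: it is a maximal multiplication operator by a real measurable function, and its graph has trivial multi-valued part. By Lemma \ref{lem:self_adjoint}, each resolvent $(A_k-z)^{-1}$ with $z \in \C^+$ is the bounded, everywhere-defined operator of multiplication by $(t_k - z)^{-1}$. As in Case 2, I define
$$v_{\vec w}(\vec t) := \prod_{j=1}^n (t_j - w_j)^{-1}.$$
This lies in $\hilb$ by the estimate \eqref{eq:checking_L2}, which uses only \eqref{eq:measure_growth} and the boundedness of $(1+t_j^2)/|t_j-w_j|^2$ for $w_j \in \C^+$.

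The identity is then the same pointwise computation as before. By \eqref{eq:action_1},
$$1 + \frac{z_k-w_k}{t_k-z_k} = \frac{t_k-w_k}{t_k-z_k},$$
so applying the product of Möbius transforms to $v_{\vec w}$ gives $\prod_k (t_k - z_k)^{-1} = v_{\vec z}$. Since multiplication operators commute, the order of the factors does not matter. Pairing with $v_{\vec w}$ gives
$$\frac{1}{\pi^n}\int_{\R^n} \prod_k \frac{1}{(t_k-z_k)(t_k-\bar{w_k})}\,\diff\mu(\vec t) = D(\vec z,\vec w).$$
This is the claimed identity, without the prefactor $\frac{1}{(2\I)^{n-1}}\prod_k(z_k-\bar{w_k})$ and without invoking Theorem \ref{thm:positive_semi_decomposition}.

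I do not expect a real obstacle. The one point to state carefully is that Case 2 appealed to Theorem \ref{thm:positive_semi_decomposition} only at the very last step, to identify the result with $q(\vec z)-\bar{q(\vec w)}$. Before that step the computation produced the Poisson-type function of $\mu$ directly, so dropping the Nevanlinna condition is harmless.
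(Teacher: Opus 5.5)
Your proposal is correct and matches the paper's route: the paper presents this proposition as implicitly established in Case 2 of the proof of Proposition \ref{prop:hn_to_op}. As you note, that computation produces $D(\vec z,\vec w)$ directly using only the growth condition \eqref{eq:measure_growth}, with the Nevanlinna condition and Theorem \ref{thm:positive_semi_decomposition} entering only in the final identification with $q(\vec z)-\bar{q(\vec w)}$. Your separate treatment of $\mu=0$ is harmless but not needed, since then $\Leb^2(\R^n,\mu)=\{0\}$ and the same construction applies.
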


\subsection{What kind of function is defined by the representation}

We need to now verify that given a Hilbert space and $n$ self-adjoint linear relations, the representation \eqref{eq:opRep_Nvar} does indeed define a Herglotz-Nevanlinna function.

\begin{prop}
\label{prop:op_to_hn}
  Let $(\hilb,\langle\cdot,\cdot\rangle_\hilb)$ be a complex Hilbert
  space, let $A_1,\ldots,A_n$ be $n$ self-adjoint linear relations on
  $\hilb$, and let $q\colon\C^{+n} \to \C$ be a function. Assume that
  for to every $\vec{w} \in \C^{+n}$ corresponds an element
  $v_{\vec{w}} \in \hilb$ for which representation
  \eqref{eq:opRep_Nvar} holds. Then, $q$ is necessarily a
  Herglotz-Nevanlinna function.
\end{prop}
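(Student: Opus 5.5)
Fix one point $\vec{w} \in \C^{+n}$ and the corresponding vector $v_{\vec{w}}$; only this single instance of \eqref{eq:opRep_Nvar} will be used. We must show two things: that $q$ is holomorphic on $\C^{+n}$, and that $\Im[q(\vec{z})] \geq 0$ there.

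\textbf{Holomorphy.} By Lemma \ref{lem:self_adjoint}, each $(A_k - z_k)^{-1}$ with $z_k \in \C^+$ is an everywhere-defined bounded operator. By Lemma \ref{lem:resolvent_identity}, the map $z_k \mapsto (A_k - z_k)^{-1}$ is holomorphic as a bounded-operator-valued function. The right-hand side of \eqref{eq:opRep_Nvar} is a product of:
\begin{itemize}
\item[(i)] a polynomial in $\vec{z}$;
\item[(ii)] the matrix coefficient $\langle T_1(z_1)\cdots T_n(z_n)v_{\vec{w}},v_{\vec{w}}\rangle_\hilb$, where $T_k(z_k) := \Id + (z_k-w_k)(A_k-z_k)^{-1}$.
\end{itemize}
Each $T_k$ depends holomorphically on $z_k$ alone. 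The composition of such operators is therefore separately holomorphic in each variable, and it is locally bounded. By Hartogs' theorem, or by Osgood's lemma using local boundedness, it is jointly holomorphic. Hence $q$ is holomorphic; the constant $\bar{q(\vec{w})}$ does not matter.

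\textbf{Non-negativity of $\Im[q]$.} The key algebraic step is to rewrite each $T_k$ through a vector-valued factorisation. For a self-adjoint relation $A$ and $z,w \in \C^+$, the identity $T(z) = \Id + (z-w)(A-z)^{-1}$ sends $v_w$ to the natural candidate for $v_z$. In the operator case, $T(z) = (A-w)(A-z)^{-1}$.

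The resolvent identity \eqref{eq:resolvent_identity} together with $((A-z)^{-1})^* = (A-\bar z)^{-1}$ gives
\begin{equation*}
(A-z)^{-1}-(A-\bar z)^{-1} = (z-\bar z)(A-\bar z)^{-1}(A-z)^{-1}.
\end{equation*}
From this one checks that
\begin{equation*}
T(z)^* T(z) \cdot (\text{scalar}) - \ldots
\end{equation*}
Rather than pursue this general route, the plan is to compute $\Im$ directly at the point $\vec{z}$ by choosing $\vec{w} := \vec{z}$ in \eqref{eq:opRep_Nvar}. This is legitimate because the hypothesis says a vector $v_{\vec{w}}$ exists for \emph{every} $\vec{w}$. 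Then all $T_k = \Id$, and \eqref{eq:opRep_Nvar} reads
\begin{equation*}
q(\vec{z}) - \bar{q(\vec{z})} = \frac{1}{(2\I)^{n-1}}\prod_{k=1}^n (2\I\,\Im z_k)\,\|v_{\vec{z}}\|_\hilb^2 .
\end{equation*}
Dividing by $2\I$ gives
\begin{equation*}
\Im[q(\vec{z})] = \prod_{k=1}^n \Im[z_k]\cdot \|v_{\vec{z}}\|_\hilb^2 \geq 0 .
\end{equation*}

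So non-negativity is immediate. The main obstacle is therefore the holomorphy step. There, the care needed is to justify that the composition of resolvents of possibly multi-valued self-adjoint relations yields a genuinely holomorphic scalar function in several variables. This is handled by the bounded, everywhere-defined resolvents from Lemma \ref{lem:self_adjoint}, combined with joint continuity: products of operator-valued functions that are norm-holomorphic in separate variables are holomorphic in $\vec{z}$.
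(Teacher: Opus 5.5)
Your proposal is correct and follows the paper's proof. Holomorphy comes from the bounded, everywhere-defined, holomorphic resolvents of Lemmas \ref{lem:resolvent_identity} and \ref{lem:self_adjoint}; your Hartogs/Osgood remark makes explicit the step from separate to joint holomorphy that the paper leaves implicit. Non-negativity comes from taking $\vec{z}=\vec{w}$, which gives $\Im[q(\vec{z})]=\prod_{k=1}^n\Im[z_k]\,\|v_{\vec{z}}\|_\hilb^2\geq 0$. The unfinished fragment about $T(z)^*T(z)$ plays no role in the argument and should be deleted.
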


\proof
Let $q\colon\C^{+n} \to \C$ be a function, let $(\hilb,\langle\cdot,\cdot\rangle_\hilb)$ be a Hilbert space, let $A_1,\ldots,A_n$ be as $n$ self-adjoint linear relations in $\hilb$, let $\vec{z},\vec{w} \in \C^{+n}$ be arbitrary and let $v_{\vec{w}} \in\hilb$ be an element of $\hilb$ depending on $\vec{w}$ but not on $\vec{z}$.

Assume now that equality \eqref{eq:opRep_Nvar} holds under these stipulations. Then, we claim that the function $q$ is necessarily holomorphic on $\C^{+n}$ and that $\Im[q(\vec{z})] \geq 0$ for all $\vec{z} \in \C^{+n}$ (\ie we claim that $q$ is a Herglotz-Nevanlinna function). To that end, we note by Lemmas \ref{lem:resolvent_identity} and \ref{lem:self_adjoint} that $q$ is holomorphic on $\C^{+n}$. On the other hand, it also readily follows that $\Im[q(\vec{z})] \geq 0$ for all $\vec{z} \in \C^{+n}$ as choosing $\vec{z} = \vec{w}$ in equality \eqref{eq:opRep_Nvar} yields
$$2\,\I\,\Im[q(\vec{z})] = 2\,\I\,\prod_{k=1}^n\Im[z_k]\cdot\|v_{\vec{z}}\|_{\hilb}^2.$$
This finishes the proof.
\endproof

We emphasize that for most choices of linear relations
$A_1,\ldots,A_n$ and maps $\vec w\to v_{\vec w}$, there exists no
function $q$ which satisfies \eqref{eq:opRep_Nvar}. The content of
Proposition \ref{prop:op_to_hn} is that if $A_1,\ldots,A_n$ and
$\vec w\to v_{\vec w}$ satisfy the additional condition that the
second term of the right side of \eqref{eq:opRep_Nvar} is of the form
$q(\vec z)-\overline{q(\vec w)}$ for some function $q$, then this $q$
is a Herglotz-Nevanlinna function. This phenomena is related to the fact that a Poisson-type function
$D(\vec z, \vec w)$ given by a measure $\mu$ is of the form
$q(\vec z)-\overline{q(\vec w)}$ if and only if $\mu$ is a Nevanlinna
measure, \cf\cite[Thm. 4.1]{Nedic2021b}. If $\mu$ is not a Nevanlinna
measure and $A_1,\ldots,A_n$ and $\vec w\to v_{\vec w}$ are
constructed from $D$ as in Proposition \ref{prop:poisson_rep}, then no $q$ satisfies \eqref{eq:opRep_Nvar}.

The following version of Theorem \ref{thm:opRep_Nvar} gives an
explicit condition on $A_1,\ldots,A_n$ and $\vec w\to v_{\vec w}$
under which a function $q$ satisfying \eqref{eq:opRep_Nvar} can be
constructed.

\begin{thm}
\label{thm:opRep_alt}
  If $q$ is a Herglotz-Nevanlinna function, there exists a Hilbert space   $(\hilb,\langle\cdot,\cdot\rangle_\hilb)$ and self-adjoint relations
  $A_1,\ldots,A_n$ on $\hilb$ such that to every
  $\vec{\alpha}\in\C^{+n}$ corresponds an element $v\in\hilb$ with
\begin{equation}\label{e.classbetter}
q(\vec{z}) = \overline{q(\vec{\alpha})} + S(\vec{z},\vec{\alpha})\quad\text{and}\quad S(\vec{z},\vec{w})+S(\vec{\alpha},\vec{\alpha})=S(\vec{z},\vec{\alpha})+S(\vec{\alpha},\vec{w}),
\end{equation}
where
\begin{equation*}
S(\vec{z},\vec{w})=p(\vec{z}-\overline{\vec{w}})L(\vec{w}, \vec{z}),\quad
p(\vec{z})=2\,\I\prod_{k=1}^n\frac{z_k}{2\,\I},\quad L(\vec{w},\vec{z})=\langle\varphi_{\vec\alpha}(\vec{z})v,\varphi_{\vec\alpha}(\vec{w})v\rangle_\hilb,
\end{equation*}
\begin{equation*}
\varphi_{\vec\alpha}(\vec{z})=\prod_{k=1}^n(\Id+(z_k-\alpha_k)(A_k-z_k)^{-1}).
\end{equation*}
Conversely, given any self-adjoint relations $A_1,\ldots,A_n$ on a
Hilbert space $\hilb$, any $\vec{\alpha}\in \C^{+n}$, and any element
$v\in\hilb$, the function $q$, which the first half of
\eqref{e.classbetter} defines up to an additive real constant, is
Herglotz-Nevanlinna, provided that the second half of
\eqref{e.classbetter} is satisfied.
\end{thm}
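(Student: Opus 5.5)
For the forward direction I would reuse the construction from Proposition \ref{prop:hn_to_op}: take $\hilb$, $A_1,\ldots,A_n$ and the family $\vec w\mapsto v_{\vec w}$ built there, and for a given $\vec\alpha$ set $v:=v_{\vec\alpha}$. By \eqref{eq:admissible-v_w} we then have $\varphi_{\vec\alpha}(\vec z)v=v_{\vec z}$ for every $\vec z\in\C^{+n}$, so that $L(\vec w,\vec z)=\langle v_{\vec z},v_{\vec w}\rangle_\hilb$.

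Since $p(\vec z-\bar{\vec w})=\frac{1}{(2\I)^{n-1}}\prod_k(z_k-\bar{w_k})$, the quantity $S(\vec z,\vec w)$ is exactly the second term on the right of \eqref{eq:opRep_Nvar}, evaluated at the pair $(\vec z,\vec w)$. This uses $\prod_k(\Id+(z_k-w_k)(A_k-z_k)^{-1})v_{\vec w}=v_{\vec z}$, which is again \eqref{eq:admissible-v_w}. Therefore \eqref{eq:opRep_Nvar} gives $S(\vec z,\vec w)=q(\vec z)-\bar{q(\vec w)}$ for all $\vec z,\vec w$. Taking $\vec w=\vec\alpha$ yields the first half of \eqref{e.classbetter}. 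The second half is then a telescoping identity:
$$q(\vec z)-\bar{q(\vec w)}+q(\vec\alpha)-\bar{q(\vec\alpha)}=q(\vec z)-\bar{q(\vec\alpha)}+q(\vec\alpha)-\bar{q(\vec w)}.$$
The only point that needs care is that the cited proof does establish \eqref{eq:admissible-v_w} in all four cases, including the direct-sum Case 4 (where it holds componentwise) and the trivial Case 1.

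For the converse, fix $A_k$, $\vec\alpha$ and $v$ with the second half of \eqref{e.classbetter} holding, and define $q(\vec z):=c+S(\vec z,\vec\alpha)$. Here $c$ must satisfy $c-\bar c=S(\vec\alpha,\vec\alpha)$, i.e. $\Im c=\tfrac12\Im S(\vec\alpha,\vec\alpha)$. This is possible because
$$S(\vec\alpha,\vec\alpha)=2\I\prod_k\Im\alpha_k\,\|v\|^2$$
is purely imaginary; its real part is the free additive real constant, and then $q(\vec z)=\bar{q(\vec\alpha)}+S(\vec z,\vec\alpha)$ holds. Holomorphy follows as in Proposition \ref{prop:op_to_hn}: $\vec z\mapsto\varphi_{\vec\alpha}(\vec z)v$ is holomorphic by Lemmas \ref{lem:resolvent_identity} and \ref{lem:self_adjoint}, since the resolvents are bounded operators depending holomorphically on each $z_k$. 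Hence $S(\vec z,\vec\alpha)$ is holomorphic in $\vec z$.

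For the sign of the imaginary part I would use the hypothesis with $\vec w=\vec z$:
$$S(\vec z,\vec z)+S(\vec\alpha,\vec\alpha)=S(\vec z,\vec\alpha)+S(\vec\alpha,\vec z).$$
I claim $S(\vec\alpha,\vec z)=-\bar{S(\vec z,\vec\alpha)}$. Indeed, $\overline{p(\vec\alpha-\bar{\vec z})}=-p(\vec z-\bar{\vec\alpha})$, because conjugating each factor gives $\bar{\alpha_k}-z_k$ and the prefactor satisfies $\overline{2\I/(2\I)^n}=-(-1)^n\cdot 2\I/(2\I)^n$. Also $L(\vec z,\vec\alpha)=\bar{L(\vec\alpha,\vec z)}$. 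Combining these,
$$S(\vec z,\vec\alpha)-\bar{S(\vec z,\vec\alpha)}=S(\vec z,\vec z)+S(\vec\alpha,\vec\alpha),$$
and therefore
$$2\I\,\Im q(\vec z)=q(\vec z)-\bar{q(\vec z)}=S(\vec z,\vec\alpha)-\bar{S(\vec z,\vec\alpha)}+c-\bar c=S(\vec z,\vec z).$$
Finally, $S(\vec z,\vec z)=2\I\prod_k\Im z_k\,\|\varphi_{\vec\alpha}(\vec z)v\|^2$, so $\Im q\ge 0$.

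The main obstacle I expect is this conjugation bookkeeping: checking that the sign from the factor $(2\I)^{-(n-1)}$ in $p$ works out so that $S(\vec\alpha,\vec z)=-\bar{S(\vec z,\vec\alpha)}$ holds for every $n$, and that $c$ can be chosen consistently.
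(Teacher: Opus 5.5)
Your approach is the paper's. The forward direction is identical: take $v=v_{\vec\alpha}$ and use \eqref{eq:admissible-v_w} to get $S(\vec z,\vec w)=q(\vec z)-\overline{q(\vec w)}$. For the converse, the paper combines $\overline{S(\vec z,\vec w)}=-S(\vec w,\vec z)$ with the second half of \eqref{e.classbetter} to get $q(\vec z)-\overline{q(\vec w)}=S(\vec z,\vec w)$ for all $\vec z,\vec w$. It then notes that dividing by $p(\vec z-\overline{\vec w})$ leaves the positive semi-definite kernel $\langle\varphi_{\vec\alpha}(\vec z)v,\varphi_{\vec\alpha}(\vec w)v\rangle_\hilb$. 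You use only the diagonal $\vec w=\vec z$, which is all that is needed for the sign of $\Im q$. You also make holomorphy and the additive constant explicit, which the paper leaves implicit.

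There is, however, a sign error in your constant $c$ that breaks two of your equalities as written. From $q(\vec z)=c+S(\vec z,\vec\alpha)$ you get $\overline{q(\vec\alpha)}=\bar c+\overline{S(\vec\alpha,\vec\alpha)}$. So the first half of \eqref{e.classbetter} requires $c-\bar c=\overline{S(\vec\alpha,\vec\alpha)}=-S(\vec\alpha,\vec\alpha)$, that is, $\Im c=-\tfrac12\Im S(\vec\alpha,\vec\alpha)$, not $+\tfrac12\Im S(\vec\alpha,\vec\alpha)$.

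With your stated choice, two things go wrong:
\begin{itemize}
\item The first half of \eqref{e.classbetter} fails, so your $q$ is not the function the theorem refers to.
\item Your final chain evaluates to $S(\vec z,\vec z)+2S(\vec\alpha,\vec\alpha)$, not $S(\vec z,\vec z)$. This is because $S(\vec z,\vec\alpha)-\overline{S(\vec z,\vec\alpha)}=S(\vec z,\vec z)+S(\vec\alpha,\vec\alpha)$ already carries one copy of $S(\vec\alpha,\vec\alpha)$, and $c-\bar c$ adds a second.
\end{itemize}

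The fix is to take $c=-\tfrac12 S(\vec\alpha,\vec\alpha)$ plus an arbitrary real constant; this is exactly the choice the paper makes in the proof of Theorem \ref{thm:rkhs}. Then the two $S(\vec\alpha,\vec\alpha)$ terms cancel, giving $2\I\,\Im q(\vec z)=S(\vec z,\vec z)=2\I\prod_k\Im z_k\,\|\varphi_{\vec\alpha}(\vec z)v\|_\hilb^2\geq 0$, and your argument is complete.
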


Note that because $\overline{S(\vec z, \vec w)} = -S(\vec w, \vec z)$,
the pair of equalities \eqref{e.classbetter} is equivalent to the
statement that $q(\vec z)-\overline{q(\vec w)}=S(\vec z, \vec w)$ for
all $\vec z$ and $\vec w$. Additionally, when $n=1$, the second half of \eqref{e.classbetter} reduces to the resolvent identity. Thus, it holds for any choice of $\hilb$, $A_1$,
$\alpha$ and $v$.

\proof
Let $q$ be Herglotz-Nevanlinna and $\vec\alpha\in\C^{+n}$. By
Proposition \ref{prop:hn_to_op} there exist $\hilb$, $A_1,\ldots,A_n$,
and $\vec w\to v_{\vec w}$ such that
\begin{equation*}
  q(\vec z) = \overline{q(\vec w)} + p(\vec z-\overline{\vec
    w})\langle{}\varphi_{\vec w}(\vec z)v_{\vec w}, v_{\vec w}\rangle{}
\end{equation*}
and
\begin{equation*}
  v_{\vec z} = \varphi_{\vec w}(\vec z)v_{\vec w}
\end{equation*}
for all $\vec z, \vec w \in \C^{+n}$.  If $v=v_{\vec\alpha}$ it
follows that
$\varphi_{\vec w}(\vec z)v_{\vec w}=\varphi_{\vec\alpha}(\vec z)v$,
$v_{\vec w}=\varphi_{\vec\alpha}(\vec w)v$, and therefore
\begin{equation*}
  q(\vec z)-\overline{q(\vec w)}=p(\vec z-\overline{\vec
    w})\langle{}\varphi_{\vec\alpha}(\vec
  z)v,\varphi_{\vec\alpha}(\vec w)v\rangle{}_\hilb=S(\vec z, \vec w).
\end{equation*}
Conversely, if \eqref{e.classbetter} holds, then
\begin{equation*}
  \frac{q(\vec{z})-\overline{q(\vec{w})}}{p(\vec{z}-\overline{\vec{w}})}=\frac{S(\vec
  z, \vec w)}{p(\vec{z}-\overline{\vec{w}})}=\langle\varphi_{\vec\alpha}(\vec{z})v,\varphi_{\vec\alpha}(\vec{w})v\rangle_\hilb,
\end{equation*}
which is positive semi-definite. This finishes the proof.
\endproof

We conclude this section with an example showing how the choice of representing parameters $\hilb$ and $A_k$ may restrict the type of Herglotz-Nevanlinna function these parameters might represent.

\begin{example}\label{ex:mapping}

Starting with a Herglotz-Nevanlinna function $q$, the constructive nature of the proof of Proposition \ref{prop:hn_to_op} gives a readily available option of a Hilbert space and corresponding self-adjoint relations. Conversely, as noted after the proof of Proposition \ref{prop:op_to_hn}, an arbitrary choice of $\hilb$ and $A_j$ and $\vec{w} \mapsto v_{\vec{w}}$ might only be able to produce a real-constant Herglotz-Nevanlinna function. Equivalently, in terms of Theorem \ref{thm:opRep_alt}, the function $S$ would be identically zero. To exemplify this phenomena, we consider the following.

Let $\hilb = \C^1$ with the usual skew-symmetric Euclidean inner product, and choose $A_1 = \ldots = A_n= 0$. Suppose that the function
\begin{equation*}
  S(\vec{z}, \vec{w}) = p(\vec{z}-\bar{\vec{w}})\langle\varphi_{\vec{w}}(\vec{z})v_{\vec{w}}, v_{\vec{w}}\rangle
\end{equation*}
is of the form $q(\vec{z}) - \bar{q(\vec{w})}$ for some function $q$. As in the proof of
Theorem \ref{thm:opRep_alt}, the following two conditions must then be satisfied by the function $S$,
\begin{equation*}
  \bar{S(\vec{z}, \vec{w})} = -S(\vec{w}, \vec{z}),\qquad S(\vec{z}, \vec{w}) + S(\vec{\alpha}, \vec{\alpha}) = S(\vec{z}, \vec{\alpha}) + S(\vec{\alpha}, \vec{w}).
\end{equation*}
Since
\begin{equation*}
  p(\vec{z}-\bar{\vec{w}}) = 2\,\I\,\prod_{k=1}^n\frac{z_k-\bar{w_k}}{2\,\I},\qquad \langle\varphi_{\vec{w}}(\vec{z})v_{\vec{w}},v_{\vec{w}}\rangle = |v_{\vec{w}}|^2\prod_{k=1}^n\frac{w_k}{z_k},
\end{equation*}
the first condition implies that
\begin{equation*}
  |v_{\vec{w}}|^2\prod_{k=1}^n|w_k|^2 = |v_{\vec{z}}|^2\prod_{k=1}^n|z_k|^2.
\end{equation*}
Therefore, $|v_{\vec{w}}|^2 = C\prod_{k=1}^n |w_k|^{-2}$ for some constant $C$, yielding
\begin{equation*}
  S(z, w) = 2\,\I\,C\prod_{k=1}^n\frac{z_k-\bar{w_k}}{2\,\I\,z_k\,\bar{w_k}}.
\end{equation*}
Choosing $z_k = w_k = \I\,y$ and $\alpha_k = \I$ for all $k=1,\ldots,n$, the second condition now
reads
\begin{equation*}
  \frac{2\,\I\,C}{y^n} + 2\,\I\,C = 4\,\I\,C\left(\frac{1}{2}+\frac{1}{2y}\right)^n.
\end{equation*}
For this to be satisfied for all $y > 0$, it must hold that either $C = 0$ (and therefore $S \equiv 0$) or $n=1$. \hfill $\lozenge$
\end{example}

\section{Comparison to other types of representations}
\label{sec:comparisons}

In general, one can ask how the representation theorems \ref{thm:opRep_Nvar} and \ref{thm:opRep_alt} relate either to other classes of holomorphic functions, or to other methods that can be used to construct similar representations.

\subsection{Operator representation of Loewner functions}
\label{sec:loewner_rep}

For Herglotz-Nevan\-linna functions that fall into the subclass of \emph{Loewner functions}, different operator representations were previously established in \cite{AglerEtal2016}. Hence, in this section, we briefly review the main representation for this class and contrast it to the ones presented in this paper. We begin by recalling the following definition, \cf \cite[pg. 3003]{AglerEtal2016}.

\begin{define}
    \label{def:loewner_function}
    A holomorphic function $q\colon \C^{+n} \to \C$ is called a \emph{Loewner function} (on $\C^{+n}$) if there exist $n$ positive semi-definite functions $F_1,\ldots,F_n$ on $\C^{+n} \times \C^{+n}$, such that the equality
    \begin{equation}
    \label{eq:lowner_functions}
        q(\vec{z}) - \bar{q(\vec{w})} = \sum_{\ell = 1}^n(z_\ell - \bar{w_\ell})F_\ell(\vec{z},\vec{w}).
    \end{equation}
    holds for all $\vec{z},\vec{w} \in \C^{+n}$.
\end{define}

In dimensions $n=1,2$, the Loewner functions coincide with the Herglotz-Nevanlinna functions, while in dimensions $n \geq 3$ they form a strict subclass of the Herglotz-Nevanlinna functions \cite[Sec. 1]{AglerEtal2016}. The main representation theorem for Loewner functions is the following \cite[Thm. 1.9]{AglerEtal2016}.

\begin{thm}
\label{thm:loewner_rep}
A function $q\colon \C^{+n} \to \C$ is a Loewner function if and only if there exist two Hilbert spaces     $(\hilb_1,\langle\cdot,\cdot\rangle_{\hilb_1})$ and $(\hilb_2,\langle\cdot,\cdot\rangle_{\hilb_2})$, a densely defined self-adjoint operator $A$ on $\hilb_2$, a vector $v \in \hilb_1 \oplus \hilb_2$ and $n$ pair-wise orthogonal projections $P_1,\ldots,P_n$ acting on $\hilb_1 \oplus \hilb_2$ that sum up to $\Id_{\hilb_1 \oplus \hilb_2}$ such that it holds, for $\vec{z} \in \C^{+n}$, that 
$$q(\vec{z}) = a + \langle M(\vec{z})v,v \rangle_{\hilb_1 \oplus \hilb_2}$$
where $a \in \R$ is a constant and
\begin{multline*}
    M(\vec{z}) := \left(-\I\matfour{\Id_{\hilb_1}}{0}{0}{A} + \matfour{0}{0}{0}{\Id_{\hilb_2}}\right) \\ 
    \cdot\left(\matfour{\Id_{\hilb_1}}{0}{0}{A} - (z_1P_1 + \ldots + z_nP_n)\matfour{0}{0}{0}{\Id_{\hilb_2}}\right)^{-1} \\
    \cdot \left((z_1P_1 + \ldots + z_nP_n)\matfour{\Id_{\hilb_1}}{0}{0}{A} + \matfour{0}{0}{0}{\Id_{\hilb_2}}\right) \\
    \cdot \left(-\I\matfour{\Id_{\hilb_1}}{0}{0}{A} + \matfour{0}{0}{0}{\Id_{\hilb_2}}\right)^{-1}.
\end{multline*}
\end{thm}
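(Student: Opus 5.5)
The proof splits into the two directions of the equivalence. The sufficiency direction is a computation. The necessity direction is a Kolmogorov-decomposition and lurking-isometry argument in the spirit of the proof of Theorem \ref{thm:opRep_Nvar}.

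\emph{Sufficiency.} Write $Z := z_1P_1 + \ldots + z_nP_n$, $E := \matfour{\Id_{\hilb_1}}{0}{0}{A}$ and $F := \matfour{0}{0}{0}{\Id_{\hilb_2}}$.
\begin{itemize}
\item[(1)] Check that $E - ZF$ is invertible for $\vec{z} \in \C^{+n}$. Since $\Im Z = \sum_\ell \Im[z_\ell]P_\ell \geq \min_\ell \Im[z_\ell]\,\Id$, the pair $\{F x, E x\}$, viewed as a relation, is self-adjoint. The inverse then exists by Lemma \ref{lem:self_adjoint}, applied with the "spectral parameter" $Z$ in place of a scalar.
\item[(2)] Put $u(\vec{z}) := (E-ZF)^{-1}(-\I E+F)^{-1}v$, up to the appropriate ordering of the factors.
\item[(3)] Use $E^*=E$, $F^*=F$ and $E F = F E$ to expand $\langle M(\vec{z})v,v\rangle - \overline{\langle M(\vec{w})v,v\rangle}$. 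The target is the polarised identity
$$q(\vec{z}) - \overline{q(\vec{w})} = \sum_{\ell=1}^n (z_\ell - \overline{w_\ell})\,\langle P_\ell\, u(\vec{z}), P_\ell\, u(\vec{w})\rangle.$$
This exhibits $F_\ell(\vec{z},\vec{w}) := \langle P_\ell u(\vec{z}), P_\ell u(\vec{w})\rangle$ as positive semi-definite, as in Definition \ref{def:positive_semi_def_function}.
\item[(4)] Holomorphy of $q$ follows from the holomorphy of $\vec{z} \mapsto (E - ZF)^{-1}$, as in Lemma \ref{lem:resolvent_identity}.
\end{itemize}

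\emph{Necessity.} Start from \eqref{eq:lowner_functions}.
\begin{itemize}
\item[(1)] Factor each $F_\ell$ through its reproducing kernel Hilbert space, $F_\ell(\vec{z},\vec{w}) = \langle u_\ell(\vec{z}), u_\ell(\vec{w})\rangle_{\mathcal{K}_\ell}$. Set $\mathcal{K} := \bigoplus_\ell \mathcal{K}_\ell$, let $P_\ell$ be the coordinate projections and $u(\vec{z}) := \oplus_\ell u_\ell(\vec{z})$. Then
$$q(\vec{z}) - \overline{q(\vec{w})} = \langle Z u(\vec{z}), u(\vec{w})\rangle - \langle u(\vec{z}), W u(\vec{w})\rangle,$$
where $W := \sum_\ell w_\ell P_\ell$.
\item[(2)] Apply the lurking-isometry trick. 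Subtract the identity at $\vec{w} = \I\vec{1}$ to remove $q$. Rearrange so that both sides are Gram matrices of the families $\{u(\vec{z}) \oplus \text{(scalars)}\}$ and $\{Z u(\vec{z}) \oplus \text{(scalars)}\}$. This shows that
$$\Gamma := \operatorname{span}\{\{u(\vec{z}) - u(\I\vec{1}),\, Z u(\vec{z}) - \I\, u(\I\vec{1})\}\}$$
is a symmetric linear relation, possibly after the scalar bookkeeping.
\item[(3)] Extend $\Gamma$ to a self-adjoint relation $\widetilde{\Gamma}$, enlarging $\mathcal{K}$ if the defect indices differ; by the standard theory \cite{BehrndtHassiDeSnoo2020} this is always possible in a larger space.
\item[(4)] Decompose the space as $\hilb_1 := \Mult(\widetilde{\Gamma})$ and $\hilb_2 := \overline{\Dom(\widetilde{\Gamma})}$. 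The operator part of $\widetilde{\Gamma}$ is then a densely defined self-adjoint operator $A$ on $\hilb_2$.
\item[(5)] Write the relation as $\{Fx, Ex\}$. The defining property of $\Gamma$ expresses $u(\vec{z})$ through $(E - ZF)^{-1}$ applied to $u(\I\vec{1})$.
\item[(6)] Substitute back into $q(\vec{z}) - \overline{q(\I\vec{1})}$. This should produce exactly the Cayley-type conjugations by $(-\I E + F)$ in $M(\vec{z})$, with $v$ a fixed multiple of $u(\I\vec{1})$ and $a = \Re\, q(\I\vec{1})$.
\end{itemize}

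\emph{Expected obstacle.} The delicate step is necessity step (3) combined with the compatibility of the projections. After enlarging the space to obtain a self-adjoint extension, the $P_\ell$ must still be orthogonal projections summing to the identity on the new space; I would try assigning the added defect space to $P_1$. Independently, the resulting block structure must match the specific form of $M(\vec{z})$. I expect to need care with the multivalued part, which is exactly the piece $\hilb_1$.
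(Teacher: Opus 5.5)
\textbf{Gap in the sufficiency half.} The concrete error is in sufficiency step (2). With $u(\vec z)=(E-ZF)^{-1}(-\I E+F)^{-1}v$, in either order, the identity of step (3) already fails for $n=1$. There $E-zF$ and $(-\I E+F)^{-1}$ are diagonal with entries $\Id,\,A-z$ and $\I,\,(1-\I A)^{-1}$. Write $\nu$ for the spectral measure of $v_2$. Your $u$ then gives the kernel $\|v_1\|^2+\int(1+t^2)^{-1}(t-z)^{-1}(t-\bar{w})^{-1}\diff\nu(t)$, whereas $q(z)-\bar{q(w)}=(z-\bar{w})\big(\|v_1\|^2+\int(1+t^2)(t-z)^{-1}(t-\bar{w})^{-1}\diff\nu(t)\big)$.

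The correct vector is the one your own necessity step (5) produces, namely $u(\vec z):=(\Id+F(E-ZF)^{-1}(Z-\I))v$. Put $s:=(E-ZF)^{-1}(Z-\I)v$, $f(\vec z):=Fs$ and $g(\vec z):=Es$. Then $\{f,g\}\in T:=\{\{Fx,Ex\}\}$ and $\{u(\vec z)-v,\,Zu(\vec z)-\I v\}=\{f,g\}$. The algebraic fact you are missing, which is also the real content of your necessity step (6), is
\[
ZE+F=\I(E-ZF)+\I(Z-\I)(-\I E+F),\quad\text{hence}\quad M(\vec z)=\I\,\Id+\I(-\I E+F)(E-ZF)^{-1}(Z-\I).
\]
This gives $q(\vec z)=a+\I\|v\|^2+\langle g(\vec z),v\rangle+\I\langle f(\vec z),v\rangle$. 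Now expand $\langle Zu(\vec z),u(\vec w)\rangle-\langle u(\vec z),Wu(\vec w)\rangle$ and use the symmetry $\langle g(\vec z),f(\vec w)\rangle=\langle f(\vec z),g(\vec w)\rangle$ of $T$; this yields exactly $q(\vec z)-\bar{q(\vec w)}$. Since $F(E-ZF)^{-1}=(T-Z)^{-1}$, this $u$ is an operator-parameter analogue of the Möbius transform in Theorem \ref{thm:opRep_Nvar}.

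Two further points. Lemma \ref{lem:self_adjoint} says nothing about an operator-valued parameter, so step (1) needs the direct estimate $\Im\langle g-Zf,f\rangle=-\sum_\ell\Im[z_\ell]\|P_\ell f\|^2\le-\min_\ell\Im[z_\ell]\,\|f\|^2$ for $\{f,g\}\in T$, plus the same estimate for $Z^*$ to get dense range. Holomorphy in (4) then comes from $(E-ZF)^{-1}-(E-WF)^{-1}=(E-ZF)^{-1}(Z-W)F(E-WF)^{-1}$ and the resulting local bounds, not from Lemma \ref{lem:resolvent_identity} itself.

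\textbf{The necessity half is correct and takes a different route.} The paper does not prove this theorem. It quotes it from \cite[Thm. 1.9]{AglerEtal2016} and only remarks that that proof relies on reductions to the one variable setting, whereas you stay in several variables throughout. The obstacles you anticipate do not arise.
\begin{itemize}
\item $\Gamma$ is symmetric with no scalar correction. The defect $\langle Zu(\vec z)-\I u(\I\vec 1),u(\vec w)-u(\I\vec 1)\rangle-\langle u(\vec z)-u(\I\vec 1),Wu(\vec w)-\I u(\I\vec 1)\rangle$ equals $[q(\vec z)-\bar{q(\vec w)}]-[q(\vec z)-\bar{q(\I\vec 1)}]-[q(\I\vec 1)-\bar{q(\vec w)}]+[q(\I\vec 1)-\bar{q(\I\vec 1)}]=0$.
\item A self-adjoint extension exists in $\mathcal K\oplus\mathcal K$. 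Putting the new summand into $P_1$ keeps the $P_\ell$ orthogonal projections summing to $\Id$, and leaves $Z$ unchanged on $\mathcal K$.
\item The canonical splitting of a self-adjoint relation into its multivalued part and a densely defined self-adjoint operator part is precisely $\widetilde\Gamma=\{\{Fx,Ex\}\}$.
\end{itemize}
Step (5) then gives $u(\vec z)=(\Id+F(E-ZF)^{-1}(Z-\I))u(\I\vec 1)$. With the identity for $M(\vec z)$ above, step (6) becomes the short check $q(\vec z)=\bar{q(\I\vec 1)}+\langle(Z+\I)u(\vec z),u(\I\vec 1)\rangle=\mathrm{Re}\,q(\I\vec 1)+\langle M(\vec z)v,v\rangle$ with $v=u(\I\vec 1)$, using $\Im q(\I\vec 1)=\|u(\I\vec 1)\|^2$. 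Your route buys a self-contained argument that makes the kernels $F_\ell(\vec z,\vec w)=\langle P_\ell u(\vec z),P_\ell u(\vec w)\rangle$ explicit and ties $M$ to the Möbius-transform structure used in this paper.
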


For Loewner functions which additionally satisfy specific growth conditions, alternative representations are known, see \cite[Thms. 1.6, 1.7 and 1.8]{AglerEtal2016}. When comparing the statements of Theorem \ref{thm:opRep_Nvar} (and Theorem \ref{thm:all_rep_1var}.c) to Theorem \ref{thm:loewner_rep}, and the objects appearing in them, the following distinctions are observed.

In general, Theorem \ref{thm:opRep_Nvar} covers a larger class of functions than Theorem \ref{thm:loewner_rep}. In the cases $n=1,2$ when the two classes of Herglotz-Nevanlinna functions and Loewner function are equal, there is at present no known correspondence between the parameters of the two theorems. Taking Theorem \ref{thm:all_rep_1var}.c as a starting point, it is Theorem \ref{thm:opRep_Nvar} that provides its most direct possible generalization in terms of which objects (\ie one Hilbert space, and $n$ self-adjoint linear relations) are used in the representation. Moreover, Theorem \ref{thm:opRep_Nvar}, through the proof of Proposition \ref{prop:hn_to_op}, provides an explicit example of which Hilbert space and relations may be used to represent a given function, which are in turn the most direct possible analogue to the corresponding construction from Theorem \ref{thm:all_rep_1var}.c. With regards to the different in the parameters used in the representations formulas of two theorems, Theorem \ref{thm:opRep_Nvar} uses a product of resolvents of different self-adjoint linear relations, but all of the relations are defined on the same Hilbert space. On the other hand, Theorem \ref{thm:loewner_rep} uses two Hilbert spaces and only one self-adjoint operator along with $n$ pair-wise orthogonal projections. Finally, with regards to the setting of the proof, the entirety of the proof of Theorem \ref{thm:opRep_Nvar} is done in the several variable setting, while the proof of Theorem \ref{thm:loewner_rep} relies on results and reduction from the one variable setting.

\subsection{Representations using reproducing kernel Hilbert spaces}
\label{sec:rep_kernel}

Using the language of reproducing kernel Hilbert spaces, Theorem \ref{thm:opRep_alt} can be reformulated as follows.

\begin{thm}
\label{thm:rkhs}
Suppose the kernel $L$ of a reproducing kernel Hilbert space of holomorphic functions on $\C^{+n}$ satisfies 
\begin{equation}\label{e.kernelrel}
S(\vec{z},\vec{w})+S(\vec{\alpha},\vec{\alpha})=S(\vec{z},\vec{\alpha})+S(\vec{\alpha},\vec{w})\qquad\,(\vec{z},\vec{w}\in\C^{+n})
\end{equation}
for some $\vec{\alpha}\in\C^{+n}$, where $S(\vec{z},\vec{w}) = p(\vec{z}-\overline{\vec{w}})L(\vec{w}, \vec{z})$ and $p$ is as in Theorem \ref{thm:opRep_alt}. Then, there exists a Herglotz-Nevanlinna function $q$ such that
\begin{equation}\label{e.kernelrel2}
L(\vec{w},\vec{z})=\frac{q(\vec{z})-\overline{q(\vec{w})}}{p(\vec{z}-\overline{\vec{w}})}.
\end{equation}
Conversely, for any Herglotz-Nevanlinna function $q$, the kernel $L$ is positive semi-definite and satisfies formula \eqref{e.kernelrel} for all $\vec{\alpha}\in\C^{+n}$. Moreover, if $\hilb=\Leb^2(\R^n,\nu) \oplus \Leb^2(\R^n,\nu_1) \oplus \ldots \oplus
\Leb^2(\R^n,\nu_n)$, where $\diff\nu=\diff\mu/\pi^n$, the vector
$\vec{b}$ is from representation \eqref{eq:intRep_Nvar}, and
\begin{equation*}
\int_{\R^n}f(\vec{s})\diff\nu_j(\vec{s})=\frac{b_j}{\pi^{n-1}}\int_{\R^{n-1}}f(s_1,\ldots,s_{j-1},0,s_j,\ldots,s_{n-1})\diff\vec{s},
\end{equation*}
and if the $\Leb^2(\R^n,\nu)$- and $\Leb^2(\R^n,\nu_j)$-components of $h_{\vec{z}}\in \hilb$ are
\begin{equation*}
\prod_{k=1}^n(t_k-\overline{z_k})^{-1}\quad\text{and}\quad(t_j-\overline{z_j})\prod_{k=1}^n(t_k-\overline{z_k})^{-1},
\end{equation*}
respectively, then $(Uf)(\vec{z})=\langle f,h_{\vec z} \rangle_\hilb$ is an isometric isomorphism of the closed span of $\{h_{\vec{z}}\colon \vec{z}\in\C^{+n}\}$ onto the reproducing kernel space corresponding to $L$.
\end{thm}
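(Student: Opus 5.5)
The proof splits into three parts: the forward direction, the converse, and the identification of the reproducing kernel space.

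\emph{Forward direction.} Assume the kernel $L$ is positive semi-definite and satisfies \eqref{e.kernelrel} for some $\vec\alpha$. I would define $q(\vec z) := S(\vec z,\vec\alpha) + c$, where $c$ is chosen so that $\Im c$ compensates $S(\vec\alpha,\vec\alpha)$. Concretely, I want $q(\vec\alpha)-\overline{q(\vec\alpha)} = S(\vec\alpha,\vec\alpha)$. This is possible because $S(\vec\alpha,\vec\alpha) = p(\vec\alpha-\bar{\vec\alpha})L(\vec\alpha,\vec\alpha)$ is purely imaginary: $p(2\I\,\Im\vec\alpha) = 2\I\prod_k \Im\alpha_k$ and $L(\vec\alpha,\vec\alpha)\ge 0$.

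The Hermitian symmetry $L(\vec w,\vec z)=\overline{L(\vec z,\vec w)}$ of a positive kernel, together with $\overline{p(\vec z-\bar{\vec w})} = -p(\vec w-\bar{\vec z})$, gives $\overline{S(\vec z,\vec w)} = -S(\vec w,\vec z)$. Combining this with \eqref{e.kernelrel}, exactly as in the remark after Theorem \ref{thm:opRep_alt}, yields $q(\vec z)-\overline{q(\vec w)} = S(\vec z,\vec w)$ for all $\vec z,\vec w$. This is \eqref{e.kernelrel2}.

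For the Herglotz-Nevanlinna property:
\begin{itemize}
\item Holomorphy of $q$ follows since $\vec z\mapsto L(\vec\alpha,\vec z)$ is an element of the RKHS of holomorphic functions (or its conjugate, depending on convention), multiplied by a polynomial.
\item Setting $\vec z=\vec w$ gives $2\I\,\Im q(\vec z) = 2\I\prod_k\Im z_k\, L(\vec z,\vec z)\ge 0$, just as in Proposition \ref{prop:op_to_hn}.
\end{itemize}
Alternatively, I would realize $L(\vec w,\vec z)=\langle k_{\vec z},k_{\vec w}\rangle$ via the kernel functions and invoke the converse half of Theorem \ref{thm:opRep_alt} directly.

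\emph{Converse.} Given a Herglotz-Nevanlinna $q$, define $L$ by \eqref{e.kernelrel2}. Theorem \ref{thm:opRep_alt} (through Proposition \ref{prop:hn_to_op}) writes
\[
L(\vec w,\vec z)=\langle\varphi_{\vec\alpha}(\vec z)v,\varphi_{\vec\alpha}(\vec w)v\rangle_\hilb = \langle v_{\vec z},v_{\vec w}\rangle_\hilb,
\]
which is a Gram kernel and hence positive semi-definite. Relation \eqref{e.kernelrel} is then immediate, because $S(\vec z,\vec w)=q(\vec z)-\overline{q(\vec w)}$ telescopes for any $\vec\alpha$.

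\emph{Isometry claim.} I would reuse the explicit model from Cases 2--4 of Proposition \ref{prop:hn_to_op}. There $v_{\vec z}$ has $\Leb^2(\mu)$-component $\prod_k(t_k-z_k)^{-1}$ and $j$-th Lebesgue component $\prod_{\ell\ne j}(s_\ell-z_\ell)^{-1}$. These are recast as functions on $\R^n$ with the $j$-th variable frozen at $0$ under $\nu_j$, and the weights $1/\pi^n$ and $b_j/\pi^{n-1}$ are absorbed into $\nu,\nu_j$. The vector $h_{\vec z}$ in the statement is the complex conjugate of $v_{\vec z}$ in each component. Hence
\[
\langle h_{\vec w},h_{\vec z}\rangle_\hilb = \overline{\langle v_{\vec w},v_{\vec z}\rangle} = \langle v_{\vec z},v_{\vec w}\rangle = L(\vec w,\vec z),
\]
and the conjugation has to be checked carefully against the RKHS convention $L(\vec w,\vec z)=K_{\vec w}(\vec z)$. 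The map $U$ therefore sends $h_{\vec w}$ to the function $\vec z\mapsto\langle h_{\vec w},h_{\vec z}\rangle = L(\vec w,\vec z)$, which is the kernel function at $\vec w$.

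By the Gram identity, $U$ preserves inner products on finite linear combinations of the $h_{\vec w}$. It therefore extends to an isometry from the closed span onto the closure of the span of the kernel functions, and that closure is the whole RKHS. Finally, $U$ vanishes on the orthogonal complement of the span, consistent with the kernel functions exhausting the space.

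\emph{Main obstacle.} I expect the main difficulty to be bookkeeping rather than conceptual:
\begin{itemize}
\item matching the conjugation and argument-order conventions of $L(\vec w,\vec z)$ against $\langle f,h_{\vec z}\rangle$;
\item the coordinate relabelling for $\nu_j$, where $(t_j-\bar z_j)$ cancels the $j$-th factor and the remaining coordinates are shifted;
\item in the forward direction, confirming that holomorphy of $q$ really follows from the RKHS hypothesis.
\end{itemize}
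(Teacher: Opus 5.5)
Your proposal is correct and follows essentially the same route as the paper: the same choice $q(\vec z)=S(\vec z,\vec\alpha)-\tfrac12 S(\vec\alpha,\vec\alpha)$ (up to a real constant), holomorphy from that of $L(\vec\alpha,\cdot)$, positivity from setting $\vec z=\vec w$, and the Gram identity $\langle h_{\vec w},h_{\vec z}\rangle_\hilb=L(\vec w,\vec z)$ read off from the explicit model of Proposition~\ref{prop:hn_to_op}; your density-and-extension argument for $U$ is an equivalent substitute for the paper's appeal to uniqueness of the reproducing kernel space. Your aside about invoking the converse half of Theorem~\ref{thm:opRep_alt} directly does not literally apply, since an arbitrary reproducing kernel is not given in the form $\langle\varphi_{\vec\alpha}(\vec z)v,\varphi_{\vec\alpha}(\vec w)v\rangle_\hilb$ with self-adjoint relations, but your main argument does not need it.
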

\proof If the kernel $L$ satisfies \eqref{e.kernelrel}, then
\eqref{e.kernelrel2} holds with
\begin{equation*}
  q(\vec z) = -\tfrac{1}{2}S(\vec\alpha, \vec\alpha) + S(\vec z, \vec\alpha).
\end{equation*}
Because $L(\vec w, \vec z)$ is holomorphic in $\vec z$, this $q$ is holomorphic. Since $L$ is positive semi-definite,
\eqref{e.kernelrel2} shows that $q$ is Herglotz-Nevanlinna.

For the converse, assume $q$ is Herglotz-Nevanlinna and let $X$ be the range of $U$ equipped with the inner product
$\langle Uf, Ug\rangle_X = \langle f, g\rangle_\hilb$.  It is clear
that $X$ is a reproducing kernel Hilbert space, with kernel
$K(\vec w, \vec z) = \langle h_{\vec w}, h_{\vec z}\rangle_\hilb$.  In the proof of Proposition \ref{prop:hn_to_op}, we showed that
$\langle h_{\vec w}, h_{\vec z}\rangle_\hilb = L(\vec w, \vec z)$.
Since the reproducing kernel space corresponding to a given kernel is
unique, the theorem follows. 
\endproof

In one variable, the Stieltjes formula shows that
$\{(t-z)^{-1}\colon z\in\cut\}$ spans a dense subspace of
$\Leb^2(\R,\nu)$. By induction, the same is true in several variables,
if the $\vec{z}$ in $\prod_{k=1}^n(t_k-z_k)^{-1}$ ranges over
$\cutN$. Thus, is would be desirable to obtain an analogous statement
where $q$ is extended to $\cutN$ in a suitable way. However,
considering \eg \cite[Sec. 4.3]{Nedic2021b}, such an investigation
lies outside the scope of the present paper. One would also like to
find a more interesting way of stating \eqref{e.kernelrel}, more akin
to \eg \cite[Thms. 5, 6]{deBranges1968}.

On a related note, one could ask whether the representation in Theorem \ref{thm:opRep_alt} is minimal in any suitable sense, and to what extent a minimal representation is unique.

\section*{Acknowledgements}

The authors would like to thank Annemarie Luger for many enthusiastic discussions on the subject, as well as careful reading of the manuscript and many helpful comments and suggestions.

\end{document}